\newcommand{\upperRomannumeral}[1]{\uppercase\expandafter{\romannumeral#1}}
\newcommand{\lowerRomannumeral}[1]{\lowercase\expandafter{\romannumeral#1}}
\theoremstyle{plain}
  \newtheorem{proposition}[]{Proposition}
  \newtheorem{lemma}[]{Lemma}
  \newtheorem{theorem}[]{Theorem}
  \newtheorem{corollary}[]{Corollary}
  \newtheorem{remark}[]{Remark}
  \newtheorem*{remark*}{Remark}
\title{A note on exponential decay in the\\ random field Ising model}
\author{Federico Camia}
\address{Division of Science, NYU Abu Dhabi, Saadiyat Campus, Abu Dhabi, UAE \& Department of Mathematics, VU Amsterdam, De Boelelaan 1081a, 1081 HV Amsterdam, the Netherlands}
\email{federico.camia@nyu.edu}
\author{Jianping Jiang}
\address{NYU-ECNU Institute of Mathematical Sciences at NYU Shanghai, 3663 Zhongshan
Road North, Shanghai 200062, China.}
\email{jjiang@nyu.edu}
\author{Charles M. Newman}
\address{Courant Institute of Mathematical Sciences, New York University,
251 Mercer st, New York, NY 10012, USA, \& NYU-ECNU Institute of Mathematical
Sciences at NYU Shanghai, 3663 Zhongshan Road North, Shanghai 200062, China.}
\email{newman@cims.nyu.edu}
\begin{document}
\begin{abstract}
For the two-dimensional random field Ising model (RFIM) with bimodal (i.e., two-valued) external field, we prove exponential decay of correlations either (\lowerRomannumeral{1}) when the temperature is larger than the critical temperature of the Ising model without external field and the magnetic field strength is small or (\lowerRomannumeral{2}) at any temperature when the magnetic field strength is sufficiently large. Unlike previous work on exponential decay, our approach is not based on cluster expansions but rather on arguably simpler methods; these combine an analysis of the Kert\'{e}sz line and a coupling of Ising measures (and also their random cluster representations) with different boundary conditions. We also show similar but weaker results for the RFIM with a general field distribution and in any dimension.
\end{abstract}
\maketitle
\section{Introduction and two-dimensional results}
\subsection{Overview}
The random field Ising model was introduced by Imry and Ma \cite{IM75} as a model of a disordered system. They predicted that the RFIM has a phase transition if and only if the dimension $d\geq3$. Bricmont and Kupiainen \cite{BK88} proved the existence of a phase transition for $d\geq3$, and Aizenman and Wehr \cite{AW90} proved for any temperature uniqueness of the Gibbs state when $d=2$.

Exponential decay of correlations for the RFIM when $d\geq2$ for (\lowerRomannumeral{1}) high temperature and for (\lowerRomannumeral{2}) any temperature with sufficiently large magnetic field  strength was proved in \cite{FI84, Ber85, DKP95} using cluster expansion methods. A rate of decay (an iterated logarithm in distance) for the RFIM when $d=2$ for all temperatures was proved in \cite{Cha18}. The results in \cite{FI84, Ber85, DKP95} do not provide any quantitative information about the regions of exponential decay (in terms of temperature and magnetic field strength). In this paper (see Theorems \ref{thmcor1}-\ref{thmABG} below), we prove exponential decay in some specified regions based on an analysis of the Kert\'{e}sz line \cite{Ker89,BGLRS08,RW08} and by using couplings of Ising random cluster measures. In addition to giving more detailed information about the location of exponential decay regions, the other main contribution of this paper is that it provides a different (and arguably simpler) proof of exponential decay than in previous work.

The Kert\'{e}sz line (or curve)  is defined according to the existence or not of an infinite cluster on $\mathbb{Z}^d$ in the random cluster representation of the Ising or $q$-state Potts model. For $d=2$ and $q=2$, we show (see part (\lowerRomannumeral{1}) of Theorem \ref{thmKerIsing} below) that this line is located at a magnetic field strength that is strictly positive when the temperature is strictly larger than the critical temperature. For $d\geq 1$ and $q\geq 2$, we show (see Theorem \ref{thmKer} or part (\lowerRomannumeral{2}) of Theorem \ref{thmKerIsing} below) strict positivity when the temperature is large. We refer to \cite{BGLRS08,RW08} and references therein for more information about the Kert\'{e}sz line.

Let us mention that a major open problem in the RFIM is to determine the true decay rate when both the temperature and magnetic field strength are low. In particular for $d=2$, there seem to be competing predictions about polynomial versus exponential decay --- see the discussion near the end of Section 1 of \cite{Cha18}.

The organization of the paper is as follows. In the rest of this section we provide some definitions and present our main two-dimensional results. In Section \ref{gendim}, we state our results for general dimension and discuss the main ideas behind the proofs. In Section \ref{secKer}, we give some results about the location of the Kert\'{e}sz line (see Theorems \ref{thmKer} and \ref{thmKerIsing} there). Sections \ref{secMixing} and \ref{secCor} are devoted to the proofs of our main results, namely Theorems \ref{thmSBSDB}-\ref{thmABG} and Corollary~\ref{cor2} below.

\subsection{Definitions}
Let $\Lambda\subseteq \mathbb{Z}^d$ be a finite subset, and denote by $\partial_{ex}\Lambda$ the set of vertices in $\mathbb{Z}^d\setminus\Lambda$ which have a nearest neighbor in $\Lambda$. The classical Ising model on $\Lambda$ at inverse temperature $\beta$ with boundary condition $\eta \in \{-1,+1\}^{\partial_{ex} \Lambda}$ and external field $\vec{\mathcal{H}}\in \mathbb{R}^{\Lambda}$ is defined by the probability measure $P^{\vec{\mathcal{H}}}_{\Lambda,\eta,\beta}$ on $\{-1,+1\}^{\Lambda}$ such that for each $\sigma\in \{-1,+1\}^{\Lambda}$,
\begin{equation}\label{eqIsingH}
P^{\vec{\mathcal{H}}}_{\Lambda,\eta,\beta}(\sigma)=\frac{1}{Z^{\vec{\mathcal{H}}}_{\Lambda,\eta,\beta}}\exp{\left[\beta\sum_{\{u,v\}}\sigma_u\sigma_v+\beta\sum_{\{u,v\}: u\in\Lambda, v\in\partial_{ex} \Lambda}\sigma_u\eta_v+\sum_{u\in\Lambda}\mathcal{H}_u\sigma_u\right]},
\end{equation}
where the first sum is over all nearest neighbor pairs in $\Lambda$, and $Z^{\vec{\mathcal{H}}}_{\Lambda,\eta,\beta}$ is the partition function that makes \eqref{eqIsingH} a probability measure.

Suppose that $\vec{\mathcal{H}}:=\{HH_u,u\in\Lambda\}$ where the $H_u$ are i.i.d. random variables with a common distribution $\nu$ of mean~$0$ and variance $1$, and $H\geq 0$. The resulting $P^{\vec{\mathcal{H}}}_{\Lambda,\eta,\beta}$, which we now denote by $P^{\vec{H}}_{\Lambda,\eta,\beta,H}$, is a random probability measure. This is known as a \textbf{random field Ising model}. We mainly consider in this paper the special cases of the \textbf{bimodal field} (i.e., $Prob(H_u=+1)=Prob(H_u=-1)=1/2$) and the \textbf{Gaussian field} (i.e., $H_u\overset{d}{=}N(0,1)$). We also occasionally consider the RFIM with a more general common distribution $\nu$ for the $H_u$'s (see Remark \ref{remgen} following Theorem \ref{thmABG} below). In the rest of the paper, $\vec{H}$ denotes the random field while $\vec{h}\in \mathbb{R}^{\Lambda}$ denotes a fixed field configuration;  $P^{\vec{h}}_{\Lambda,\eta,\beta,H}$ is often called the quenched distribution. We denote by $\langle \cdot \rangle^{\vec{h}}_{\Lambda,\eta,\beta,H}$  the expectation with respect to $P^{\vec{h}}_{\Lambda,\eta,\beta,H}$. Let $\beta_c(d)$ be the critical inverse temperature of the Ising model on $\mathbb{Z}^d$ without external field (i.e., with $H=0$). Define $\beta_P(d)$ by
\begin{equation}
1-e^{-2\beta_P(d)}=p_c^b(d),
\end{equation}
where $p_c^b(d)$ is the critical probability for independent Bernoulli bond percolation on $\mathbb{Z}^d$.
Let $TV(\cdot,\cdot)$ denote the total variation distance between probability measures. Denote by $|\cdot|$ the Euclidean distance and let $d(U,V):=\inf_{x\in U, y\in V}|x-y|$ denote the distance between two sets $U,V\subseteq\mathbb{R}^d$. Let $\Lambda_L:=[-L,L]^d$ be the box of side length $2L$ centered at the origin.

\subsection{Two-dimensional results}
One of the main results in two dimensions is:
\begin{theorem}\label{thmcor1}
Consider the RFIM with bimodal field and with $d=2$. For all $0\leq \beta<\beta_c(2)$, there exists $H_1(\beta)> 0$ such that for each $H\in[0,H_1(\beta))$,
\begin{equation}
\sup_{\eta,\eta^{\prime}\in\{-1,+1\}^{\partial_{ex} \Lambda_L}}\left[\langle \sigma_0 \rangle^{\vec{h}}_{\Lambda_L,\eta,\beta,H}-\langle \sigma_0 \rangle^{\vec{h}}_{\Lambda_L,\eta^{\prime},\beta,H}\right] \leq C_2(\beta,H) e^{-C_1(\beta, H)L}
\end{equation}
for each realization $\vec{h}\in\{-1,+1\}^{\Lambda}$, where $C_1(\beta,H),C_2(\beta,H)\in (0,\infty)$ depend only on $\beta, H$. Here
\begin{equation}
 H_1(\beta) \text{ is }\begin{cases}
      \infty, & \beta\in[0,\beta_P(2)], \\
      \in(0,\infty), &\beta\in(\beta_P(2),\beta_c(2)).
   \end{cases}
\end{equation}
\end{theorem}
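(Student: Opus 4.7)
My plan is to express the influence of the boundary condition through the Edwards--Sokal random cluster representation of the Ising model with an external field. Adjoin a ghost site $g$ to $\Lambda_L$ and denote by $\phi^{\vec h}_{\Lambda_L,\beta,H}$ the corresponding random cluster measure, whose spatial edges inside $\Lambda_L$ carry parameter $p=1-e^{-2\beta}$ and whose edges $\{u,g\}$ carry weights determined by $h_u$ and $H$. The standard monotone coupling of random cluster measures with differing boundary conditions $\eta,\eta'$ lifts to a coupling of the two Ising measures with the property that the two coupled spin configurations agree at the origin whenever $0$ is not joined to $\partial_{ex}\Lambda_L$ by an open \emph{spatial} path (one that avoids $g$). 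This yields
\begin{equation*}
\langle\sigma_0\rangle^{\vec h}_{\Lambda_L,\eta,\beta,H}-\langle\sigma_0\rangle^{\vec h}_{\Lambda_L,\eta',\beta,H}\;\le\;2\,\phi^{\vec h}_{\Lambda_L,\beta,H}\bigl(0\longleftrightarrow\partial_{ex}\Lambda_L\bigr),
\end{equation*}
where the connection on the right uses spatial edges only, so the task reduces to bounding this random cluster connection probability by $C_2 e^{-C_1 L}$ uniformly over $\vec h\in\{-1,+1\}^{\Lambda_L}$.

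I would next invoke Theorem~\ref{thmKerIsing}(i) to locate the Kert\'esz line at some $H_K(\beta)\in(0,\infty]$ for every $\beta<\beta_c(2)$---with $H_K(\beta)=\infty$ whenever $\beta\le\beta_P(2)$, by comparison with subcritical Bernoulli bond percolation, and $H_K(\beta)\in(0,\infty)$ when $\beta\in(\beta_P(2),\beta_c(2))$---so that for $H<H_K(\beta)$ the infinite-volume random cluster measure has no infinite spatial cluster. Uniformity over the quenched disorder $\vec h$ would then come from a stochastic domination argument: since the spatial connectivity event is increasing in the joint (spatial plus ghost) edge configuration, and flipping the sign of $h_u$ only reassigns the parameter of the single ghost edge $\{u,g\}$ within the collection of ghost edges, one may compare the connection probability under an arbitrary bimodal $\vec h$ with the corresponding probability in a reference configuration of maximum ghost-edge strength.

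The final and most delicate step is to promote the qualitative ``no infinite spatial cluster'' statement into a quantitative exponential bound on $\phi^{\vec h}_{\Lambda_L,\beta,H}(0\leftrightarrow\partial_{ex}\Lambda_L)$; this is the main obstacle. In two dimensions one can proceed either through a differential-inequality sharpness argument in the spirit of Aizenman--Barsky--Fern\'andez or Duminil-Copin--Tassion, adapted to the presence of the ghost site, or by exploiting planar duality together with an RSW-type finite-size criterion to convert subcriticality of the primal measure into exponential decay of primal crossings. Taking $H_1(\beta)$ to be any value strictly below $H_K(\beta)$ then delivers the claimed bound with constants $C_1(\beta,H),C_2(\beta,H)\in(0,\infty)$ that depend only on $\beta$ and $H$ and are independent of the realization $\vec h$ and of the boundary conditions $\eta,\eta'$.
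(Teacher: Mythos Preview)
Your overall strategy coincides with the paper's: reduce via Edwards--Sokal to a random cluster connection probability, then invoke the Kert\'esz-line result (Theorem~\ref{thmKerIsing}) together with exponential decay below it (Theorem~\ref{thmPCdecay}, which the paper obtains from Duminil-Copin--Raoufi--Tassion rather than the two alternatives you list). The reduction you describe is essentially Lemma~\ref{lemcoupleIsing}.

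There is, however, a genuine gap in your handling of the random cluster representation for a field of \emph{varying sign}. A single ghost $g$ does not give the correct Edwards--Sokal coupling here: one needs two ghosts $g^+,g^-$, with $u$ linked to $g^+$ when $h_u>0$ and to $g^-$ when $h_u<0$, and the resulting measure carries the hard constraint $1_{\{g^+\centernot\longleftrightarrow g^-\}}$ (see \eqref{eqRCGdef}). That constraint destroys the FKG lattice property, so neither the ``standard monotone coupling of random cluster measures'' nor the comparison you sketch (``flipping the sign of $h_u$ only reassigns the parameter of the single ghost edge'') goes through directly---flipping a sign reroutes an edge from $g^+$ to $g^-$ and interacts nontrivially with the constraint, so it is not a monotone perturbation of a fixed FKG measure. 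The paper's fix is Lemma~\ref{lemsd}: the constrained measure $\mathbb{P}^{\vec h}_{\Lambda,\rho,\beta,H}$ is stochastically dominated by the unconstrained measure $\mathbb{P}^{|\vec h|}_{\Lambda,\rho,\beta,H}$, which \emph{does} satisfy FKG and admits the exploration/coupling argument. For bimodal disorder $|h_u|\equiv 1$, the spatial-edge marginal of $\mathbb{P}^{|\vec h|}_{\Lambda,w,\beta,H}$ is exactly the constant-field random cluster measure $\mathbb{P}_{\Lambda,w,\beta,H}$ of \eqref{eqRCdef}, and this is what delivers the uniformity over $\vec h$ that you were aiming for. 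Once you insert this domination step, your argument is complete and agrees with the paper's.
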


For $\Delta \subseteq \Lambda \subseteq \mathbb{Z}^d$ and $\sigma \in \{-1,+1\}^{\Lambda}$, we denote by $\sigma_{\Delta}$ the restriction of $\sigma$ to $\Delta$. We will actually prove the following stronger result (of which Theorem \ref{thmcor1} is a special case):

\begin{theorem}\label{thmSBSDB}
Consider the RFIM with bimodal field and with $d=2$. Let $\Delta\subseteq\Lambda$ be finite subsets of $\mathbb{Z}^2$. For all $0\leq \beta<\beta_c(2)$, for each $H\in[0,H_1(\beta))$,
\begin{equation}\label{eqmixingB21}
\sup_{\eta,\eta^{\prime}\in\{-1,+1\}^{\partial_{ex} \Lambda}}TV\left(P^{\vec{h}}_{\Lambda,\eta,\beta,H}(\sigma_{\Delta}\in \cdot), P^{\vec{h}}_{\Lambda,\eta^{\prime},\beta,H}(\sigma_{\Delta}\in \cdot)\right) \leq \sum_{x\in\partial_{ex}\Delta, y\in\partial_{ex}\Lambda}e^{-C_1(\beta,H)|x-y|}
\end{equation}
for each realization $\vec{h}\in\{-1,+1\}^{\Lambda}$, where $H_1(\beta)$ is as in Theorem \ref{thmcor1}.
\end{theorem}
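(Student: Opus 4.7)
The plan is to pass to the Edwards--Sokal / random cluster representation of the RFIM on $\Lambda$, in which the bimodal field is encoded by two ghost vertices $g^+$ and $g^-$. For each $u \in \Lambda$ with $h_u = +1$ (resp.\ $h_u = -1$) one draws the edge $\{u, g^+\}$ (resp.\ $\{u, g^-\}$) with parameter $1 - e^{-2H}$; every lattice edge, including those joining $\Lambda$ to $\partial_{ex}\Lambda$, carries parameter $1 - e^{-2\beta}$. Denote by $\Phi^{\vec h}_{\Lambda,\eta,\beta,H}$ the resulting quenched random cluster measure. The standard coupling to $P^{\vec h}_{\Lambda,\eta,\beta,H}$ proceeds by assigning, conditionally on the edge configuration, an independent uniform $\pm 1$ label to each cluster, subject to the deterministic constraint that clusters containing $g^{\pm}$ take spin $\pm 1$ and clusters containing a boundary vertex $v \in \partial_{ex}\Lambda$ take spin $\eta_v$.

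The structural observation driving the proof is that the spin at $v \in \Delta$ depends on the boundary condition $\eta$ only through sign constraints inherited from boundary vertices lying in the cluster of $v$. Since $g^{\pm}$ are not adjacent to $\partial_{ex}\Lambda$, a cluster can meet $\partial_{ex}\Lambda$ only through an open lattice path. Realizing both measures on a common probability space via a monotone coupling of the FK configurations (both are stochastically dominated by the wired-boundary measure $\Phi^{\vec h}_{\Lambda,+,\beta,H}$) and assigning shared uniform labels to clusters that avoid $\partial_{ex}\Lambda$ then yields the disagreement bound
\begin{equation*}
TV\!\left(P^{\vec h}_{\Lambda,\eta,\beta,H}(\sigma_\Delta \in \cdot),\, P^{\vec h}_{\Lambda,\eta',\beta,H}(\sigma_\Delta \in \cdot)\right) \le \Phi^{\vec h}_{\Lambda,+,\beta,H}\!\left(\Delta \xleftrightarrow{\Lambda} \partial_{ex}\Lambda\right),
\end{equation*}
where the connection event uses lattice edges only. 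A union bound over pairs $(x,y) \in \partial_{ex}\Delta \times \partial_{ex}\Lambda$ reduces the task to a uniform two-point estimate of the form $\Phi^{\vec h}_{\Lambda,+,\beta,H}(x \xleftrightarrow{\Lambda} y) \le e^{-C_1(\beta,H)|x-y|}$.

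The two-point exponential decay is where the hypothesis $H < H_1(\beta)$ enters, through the Kert\'{e}sz-line analysis of Section~\ref{secKer}, in particular Theorem~\ref{thmKerIsing}(i). When $\beta \le \beta_P(2)$, the lattice marginal of the FK measure is stochastically dominated by Bernoulli bond percolation at $p = 1 - e^{-2\beta} \le p_c^b(2)$, regardless of $\vec h$ and $H$, so the classical subcritical-percolation exponential decay gives the bound with $H_1(\beta) = \infty$. When $\beta \in (\beta_P(2), \beta_c(2))$ the ghost edges can genuinely glue lattice clusters together, so subcriticality of the lattice-only connectivity is nontrivial; part~(i) of Theorem~\ref{thmKerIsing} supplies an $H_1(\beta) > 0$ below which the Kert\'{e}sz transition has not yet occurred, and the sharpness arguments developed alongside it upgrade non-percolation to exponential decay of the lattice two-point function, uniformly in $\vec h$.

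The main obstacle I foresee is the coupling in the second step, since $\Phi^{\vec h}_{\Lambda,\eta}$ itself depends on $\eta$ through its cluster-counting weight, and one cannot naively share edge variables between $\Phi^{\vec h}_{\Lambda,\eta}$ and $\Phi^{\vec h}_{\Lambda,\eta'}$. The clean route is to exploit FKG monotonicity of the quenched FK measure in the boundary condition, realizing both measures on a single probability space dominated by $\Phi^{\vec h}_{\Lambda,+}$, so that any site at which the two spin samples disagree necessarily lies in a lattice cluster of $\Phi^{\vec h}_{\Lambda,+}$ that touches $\partial_{ex}\Lambda$—precisely the event appearing on the right-hand side above. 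Once this coupling is in place, the rest of the argument is bookkeeping.
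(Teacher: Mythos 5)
Your proposal follows essentially the same route as the paper: the Edwards--Sokal representation with two ghosts, a monotone disagreement coupling dominated by the wired measure with $g^+$ and $g^-$ identified (which is needed precisely because the constrained two-ghost measure is not itself FKG---the point you correctly flag as the main obstacle), the observation that for the bimodal field this dominating measure has the homogeneous-field random cluster marginal on lattice edges, and then exponential decay below the Kert\'{e}sz line via Theorem \ref{thmKerIsing}(i) and the sharpness result (Theorem \ref{thmPCdecay}). This matches the paper's Lemmas \ref{lemsd}, \ref{lemcoupleRC}, \ref{lemcoupleIsing} and the subsequent deduction, so the proposal is correct.
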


Equations \eqref{eqmixingB21} is usually called the \textbf{weak mixing} property (see \cite{Ale98}). It is easy to see that the FKG lattice property and the weak mixing property together imply that there exists a unique infinite volume limit of $P^{\vec{h}}_{\Lambda,\eta,\beta,H}$ as $\Lambda\rightarrow \mathbb{Z}^d$ which does not depend on the choice of $\eta$. We denote this infinite volume limit by $P^{\vec{h}}_{\mathbb{Z}^d,\beta,H}$ and let $\langle \cdot \rangle^{\vec{h}}_{\mathbb{Z}^d,\beta,H}$ be its expectation. In \cite{Ale98}, a weak mixing property for such infinite volume measures is also defined; Theorem \ref{thmSBSDB} says that $P^{\vec{h}}_{\mathbb{Z}^d,\beta,H}$ has this weak mixing property for $(\beta,H)$ in the region described in the theorem. As another consequence of Theorem \ref{thmSBSDB}, we have the following exponential decay of the truncated two-point function. We remark that similar exponential decay occurs for the RFIMs and $(\beta,H)$ regions described in Theorems \ref{thmSBLDB}-\ref{thmABG} (and Remark \ref{remgen}) below.

\begin{corollary}\label{cor2}
Consider the RFIM with bimodal field and with $d=2$. For all $0\leq \beta<\beta_c(2)$, there exists $H_1(\beta)>0 $ such that for each $H\in[0,H_1(\beta))$,
\begin{equation}\label{eqttdecayG2}
0\leq \langle \sigma_x\sigma_y\rangle^{\vec{h}}_{\mathbb{Z}^d,\beta,H}-\langle \sigma_x\rangle^{\vec{h}}_{\mathbb{Z}^d,\beta,H}\langle \sigma_y\rangle^{\vec{h}}_{\mathbb{Z}^d,\beta,H} \leq C_{7}(\beta,H) e^{-C_{1}(\beta,H)|x-y|/2},~\forall x,y\in\mathbb{Z}^d
\end{equation}
for each realization $\vec{h}\in\{-1,+1\}^{\mathbb{Z}^d}$, where $C_{7}(\beta,H)\in (0,\infty)$ depends only on $\beta,H$.
\end{corollary}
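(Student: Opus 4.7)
The plan is to derive the corollary from Theorem \ref{thmSBSDB} (in its infinite-volume incarnation, as noted in the paragraph preceding the corollary) together with the DLR equations and the FKG inequality. The non-negativity $\langle \sigma_x\sigma_y\rangle^{\vec{h}}_{\mathbb{Z}^d,\beta,H}\geq \langle \sigma_x\rangle^{\vec{h}}_{\mathbb{Z}^d,\beta,H}\langle \sigma_y\rangle^{\vec{h}}_{\mathbb{Z}^d,\beta,H}$ is immediate from FKG applied to the quenched Ising measure with external field, so the substantive content is the exponential upper bound.

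For the upper bound I would fix $x\neq y$ in $\mathbb{Z}^2$, pick an integer $L$ of order $|x-y|/2$ so that $y\notin\Lambda$ with room to spare, and let $\Lambda:=x+\Lambda_L$ be a box centered at $x$ with $d(\{x\},\partial_{ex}\Lambda)\geq L$. Writing $\langle\cdot\rangle:=\langle\cdot\rangle^{\vec{h}}_{\mathbb{Z}^2,\beta,H}$ for brevity, conditioning on the configuration on $\partial_{ex}\Lambda$ and invoking the DLR equations together with the tower property yields
\begin{equation*}
\langle\sigma_x\sigma_y\rangle-\langle\sigma_x\rangle\langle\sigma_y\rangle=\Bigl\langle\sigma_y\Bigl(\langle\sigma_x\rangle^{\vec{h}}_{\Lambda,\sigma_{\partial_{ex}\Lambda},\beta,H}-\langle\sigma_x\rangle\Bigr)\Bigr\rangle.
\end{equation*}
Since $|\sigma_y|\leq 1$ and the difference of expectations of the $\pm 1$-valued spin $\sigma_x$ is at most twice the total variation distance between its one-point marginals, the absolute value of the right-hand side is bounded by $2\sup_\eta TV\bigl(P^{\vec{h}}_{\Lambda,\eta,\beta,H}(\sigma_x\in\cdot),\,P^{\vec{h}}_{\mathbb{Z}^2,\beta,H}(\sigma_x\in\cdot)\bigr)$.

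Applied with $\Delta=\{x\}$, the weak mixing bound \eqref{eqmixingB21} of Theorem \ref{thmSBSDB}, in its infinite-volume version, controls the supremum above by $\sum_{u\in\partial_{ex}\{x\}}\sum_{v\in\partial_{ex}\Lambda}e^{-C_1(\beta,H)|u-v|}$. Since $|\partial_{ex}\{x\}|$ is a constant in $d=2$, $|\partial_{ex}\Lambda|=O(L)$, and $|u-v|\geq L-O(1)$ for every pair contributing to the sum, the double sum is $O(L)\,e^{-C_1(\beta,H)L}$. Absorbing the linear prefactor into the exponential at the price of a mild reduction of the decay rate and recalling that $L$ is of order $|x-y|/2$ yields the claimed bound $C_7(\beta,H)\,e^{-C_1(\beta,H)|x-y|/2}$ (with constants relabeled as necessary). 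The only nontrivial point in this program is the passage of the weak mixing inequality from the finite-volume measures to the infinite-volume measure $P^{\vec{h}}_{\mathbb{Z}^2,\beta,H}$; this follows from the FKG lattice property together with the uniqueness of the infinite-volume limit, both available in this regime by the observation preceding the corollary.
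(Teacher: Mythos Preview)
Your proposal is correct and follows essentially the same route as the paper: both reduce the truncated two-point function, via the DLR/spatial Markov property with a box $\Lambda_L(x)$ of half-size $L\approx |x-y|/2$ around $x$ not containing $y$, to a difference of one-point expectations under varying boundary conditions, and then invoke the weak mixing bound of Theorem~\ref{thmSBSDB}. The only cosmetic difference is that the paper first decomposes the covariance into the four terms $|P(\sigma_x=s_1,\sigma_y=s_2)-P(\sigma_x=s_1)P(\sigma_y=s_2)|$ and compares two finite-volume boundary conditions directly, whereas you keep the covariance intact and compare finite volume against infinite volume (correctly noting the extra step needed for the infinite-volume extension); the substance is the same.
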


\section{Results for general dimension}\label{gendim}
\subsection{Results for general dimension}
For the RFIM with bimodal field and general $d$, we have:
\begin{theorem}\label{thmSBLDB}
Consider the RFIM with bimodal field and with $d\geq 1$. Let $\Delta\subseteq\Lambda$ be finite subsets of $\mathbb{Z}^d$.
 For certain $\beta\in[0, \beta_c(d))$, as indicated below, there exists $\tilde{H}_1(\beta)> 0$ such that for each $H\in[0,\tilde{H}_1(\beta))$,
\begin{equation}\label{eqmixingB22}
\sup_{\eta,\eta^{\prime}\in\{-1,+1\}^{\partial_{ex} \Lambda}}TV\left(P^{\vec{h}}_{\Lambda,\eta,\beta,H}(\sigma_{\Delta}\in \cdot), P^{\vec{h}}_{\Lambda,\eta^{\prime},\beta,H}(\sigma_{\Delta}\in \cdot)\right) \leq \sum_{x\in\partial_{ex}\Delta, y\in\partial_{ex}\Lambda}e^{-C_1(\beta,H)|x-y|}
\end{equation}
for each realization $\vec{h}\in\{-1,+1\}^{\Lambda}$, where $C_1(\beta,H)\in (0,\infty)$ depends only on $\beta, H$ (and $d$). Here
\begin{equation}
 \tilde{H}_1(\beta)\text{ is }\begin{cases}
      \infty, & \beta\in[0,\beta_P(d)],\\
      \in(0,\infty), & \beta\in(\beta_P(d),\beta_P(d)+\epsilon_d],\\
      \in[0,\infty), & \beta \in (\beta_P(d)+\epsilon_d, \beta_c(d)),\\
   \end{cases}
\end{equation}
where $\epsilon_d>0$.
\end{theorem}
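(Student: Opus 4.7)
The plan is to mimic the two-dimensional argument behind Theorem \ref{thmSBSDB}, replacing the $d=2$-specific percolation input by the general-$d$ Kert\'esz-line results of Section \ref{secKer}. The argument has three components: a two-ghost random-cluster representation of the RFIM, a coupling that reduces weak mixing to a quenched FK two-point estimate, and a percolation input whose form differs in the two parameter branches of $\tilde H_1(\beta)$.

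First, I would set up the Edwards-Sokal representation on the extended graph $\Lambda\cup\partial_{ex}\Lambda\cup\{g_+,g_-\}$. Nearest-neighbor edges inside $\Lambda\cup\partial_{ex}\Lambda$ are opened independently with probability $p_\beta:=1-e^{-2\beta}$, each $u\in\Lambda$ is linked to $g_{\mathrm{sgn}(h_u)}$ with probability $p_H:=1-e^{-2H}$, and each $v\in\partial_{ex}\Lambda$ is wired to $g_{\eta_v}$. Conditional on the bond configuration $\omega$, every FK cluster carries a single spin determined by its unique ghost or boundary vertex when one is present, or otherwise an independent $\pm1$ weighted by $e^{\pm m_C}$ with $m_C:=\sum_{u\in C}h_u$. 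A standard boundary-exploration coupling then yields the reduction
\begin{equation*}
\sup_{\eta,\eta'}TV\!\left(P^{\vec h}_{\Lambda,\eta,\beta,H}(\sigma_\Delta\in\cdot),\,P^{\vec h}_{\Lambda,\eta',\beta,H}(\sigma_\Delta\in\cdot)\right)\le\sum_{x\in\partial_{ex}\Delta}\sum_{y\in\partial_{ex}\Lambda}\phi^{\vec h}_\Lambda(x\leftrightarrow y),
\end{equation*}
where $\phi^{\vec h}_\Lambda$ denotes the FK marginal on $\mathbb Z^d$-bonds with a suitably chosen (e.g.\ wired) boundary condition. Proving \eqref{eqmixingB22} thus reduces to the quenched exponential bound $\phi^{\vec h}_\Lambda(x\leftrightarrow y)\le Ce^{-C_1(\beta,H)|x-y|}$, uniform in $\vec h$ and $\Lambda$.

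For the first branch $\beta\in[0,\beta_P(d)]$ I would invoke Holley-type domination: the cluster-merging weight factor is $(1+\tanh m_{C_1}\tanh m_{C_2})/2\le1$, which gives $\phi^{\vec h}_\Lambda\preccurlyeq\mathrm{Bernoulli}(p_\beta)$ uniformly in $\vec h$. For $\beta<\beta_P(d)$ this is strictly subcritical Bernoulli percolation, giving the required exponential decay; at the endpoint $\beta=\beta_P(d)$ one needs in addition the strict inequality $p_c^{\mathrm{FK},q=2}(d)>p_c^b(d)$ together with a finer stochastic comparison to a strictly subcritical random-cluster reference measure in order to extract a positive decay rate at the critical Bernoulli parameter. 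For the second branch $\beta\in(\beta_P(d),\beta_P(d)+\epsilon_d]$ Bernoulli domination is supercritical, and one must instead appeal to the Kert\'esz-line estimate of Theorem \ref{thmKer}, which furnishes $\epsilon_d>0$ and a threshold $\tilde H_1(\beta)>0$ such that $\phi^{\vec h}_\Lambda$ has no infinite cluster on $\mathbb Z^d$ throughout the strip. Non-percolation would then be upgraded to exponential decay either via an OSSS/Duminil-Copin--Raoufi--Tassion style sharpness argument adapted to $\phi^{\vec h}_\Lambda$, or, more elementarily, by a continuity-in-$(\beta,H)$ comparison around $(\beta_P(d),0)$ that dominates $\phi^{\vec h}_\Lambda$ by a strictly subcritical reference percolation on a sufficiently narrow strip.

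The hard part will be the second branch. The random-cluster representation and the coupling reduction are essentially mechanical in any dimension, and the first branch is a one-line Holley argument; the real work is promoting the Kert\'esz-line non-percolation output of Theorem \ref{thmKer} to quenched exponential decay uniform over every $\vec h\in\{-1,+1\}^\Lambda$, including near-constant realizations where the two-ghost model is closest to the uniform-field Ising model. This perturbative sharpness step is precisely what constrains the claim to a narrow strip $(\beta_P(d),\beta_P(d)+\epsilon_d]$ above the Bernoulli threshold and permits $\tilde H_1(\beta)=0$ in the third branch $(\beta_P(d)+\epsilon_d,\beta_c(d))$, where the method cannot be pushed further.
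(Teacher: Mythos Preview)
Your outline has the right architecture---Edwards--Sokal with two ghosts, a boundary-exploration coupling reducing to an FK connectivity bound, then a percolation input---but it misses the key simplification that makes the bimodal case tractable, and as a result you regard as ``the hard part'' something that in the paper is essentially free.

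First a minor error in the setup: in the two-ghost Edwards--Sokal coupling with ghost edges opened at probability $1-e^{-2H|h_u|}$, clusters not touching a ghost or the boundary receive a \emph{uniform} $\pm1$ spin, not one weighted by $e^{\pm m_C}$. The field is entirely encoded in the ghost-edge Bernoulli factors; nothing is left over to bias free clusters. Your mixed description (ghost edges \emph{and} weighted free-cluster spins) would double-count the field.

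The substantive gap is this. The FK marginal $\phi^{\vec h}_\Lambda$ you write down---the random-cluster measure with the constraint $g_+\centernot\longleftrightarrow g_-$---does depend on $\vec h$, and proving exponential decay uniformly over all $\vec h\in\{-1,+1\}^\Lambda$ directly for it would indeed be delicate. But the paper never attempts this. Instead (Lemma \ref{lemsd}) it dominates $\phi^{\vec h}_\Lambda$ by the measure $\mathbb P^{|\vec h|}_{\Lambda,w,\beta,H}$ obtained by dropping the $g_+\centernot\longleftrightarrow g_-$ indicator. The crucial observation, specific to the bimodal field, is that $|h_u|\equiv 1$: the dominating measure therefore has identical Bernoulli weight $1-e^{-2H}$ on every external edge, and its marginal on the internal edges of $\mathbb Z^d$ coincides with the \emph{homogeneous-field} random-cluster measure $\mathbb P_{\Lambda,w,\beta,H}$ of \eqref{eqRCdef}. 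All dependence on the realization $\vec h$ vanishes at this step. Exponential decay is then read off directly from Theorem \ref{thmPCdecay} (the DCRT sharpness result applied to the homogeneous measure) whenever $H<H_K(\beta)$, and the case distinction for $\tilde H_1(\beta)$ is exactly the information on $H_K(\beta)$ supplied by Theorem \ref{thmKerIsing}(ii). There is no separate Holley argument for the first branch and no perturbative sharpness step for the second: both are handled uniformly by the single domination $\phi^{\vec h}_\Lambda\preccurlyeq\mathbb P_{\Lambda,w,\beta,H}$ followed by one application of Theorem \ref{thmPCdecay}.
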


For the RFIM with Gaussian (or other) field and $\beta\in[0,\beta_P(d))$, we have:
\begin{theorem}\label{thmSBG}
Consider the RFIM with Gaussian field (or with any common distribution $\nu$) and with $d\geq1$. Let $\Delta\subseteq\Lambda$ be finite subsets of $\mathbb{Z}^d$. For all $0\leq \beta<\beta_P(d)$ and $H>0$,
\begin{equation}\label{eqmixingG2}
\sup_{\eta,\eta^{\prime}\in\{-1,+1\}^{\partial_{ex} \Lambda}}TV\left(P^{\vec{h}}_{\Lambda,\eta,\beta,H}(\sigma_{\Delta}\in \cdot), P^{\vec{h}}_{\Lambda,\eta^{\prime},\beta,H}(\sigma_{\Delta}\in \cdot)\right) \leq \sum_{x\in\partial_{ex}\Delta, y\in\partial_{ex}\Lambda}e^{-C_3(\beta)|x-y|}
\end{equation}
for each realization $\vec{h}\in\mathbb{R}^{\Lambda}$, where $C_3(\beta)\in (0,\infty)$ depends only on $\beta$ (and $d$).
\end{theorem}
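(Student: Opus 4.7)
Plan. The proof goes through the Edwards-Sokal random cluster representation of the RFIM together with a Bernoulli-percolation domination. The hypothesis $\beta<\beta_P(d)$ is exactly $p:=1-e^{-2\beta}<p_c^b(d)$, so Bernoulli($p$) bond percolation on $\mathbb{Z}^d$ is subcritical; by Aizenman-Barsky-Menshikov sharpness, there is $C_3(\beta)>0$ such that $\mathbb{P}_p(x\leftrightarrow y)\leq e^{-C_3(\beta)|x-y|}$. This is the only percolation input, and it explains why the final constant depends on $\beta$ alone, uniformly over $\vec{h}$, $H$, and the common distribution $\nu$.

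First I would set up, for each quenched field $\vec{h}$ and each boundary condition $\eta$, the joint measure $\pi^{\vec{h}}_{\Lambda,\eta,\beta,H}(\sigma,\omega)$ on spin and bond configurations on $\Lambda\cup\partial_{ex}\Lambda$ such that, given $\sigma$ (with $\sigma|_{\partial_{ex}\Lambda}=\eta$), each edge is independently open with probability $p$ if its endpoints agree and closed otherwise. The spin marginal is $P^{\vec{h}}_{\Lambda,\eta,\beta,H}$. After integrating out the spins, the bond marginal assigns cluster weight $2\cosh(h_C)$ to each cluster $C$ disjoint from $\partial_{ex}\Lambda$, $e^{\pm h_C}$ to each cluster touching only the $+$ (respectively $-$) part of the boundary, and $0$ to any cluster touching both signs.

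I would then show that this bond marginal is stochastically dominated by Bernoulli($p$), uniformly in $\eta$ and $\vec{h}$. By Holley's criterion it suffices to check that, for every single edge $e$ and every configuration of the remaining edges, the conditional probability of $\omega_e=1$ is at most $p$. Equivalently, the cluster-weight ratio $R=W(C_1\cup C_2)/[W(C_1)W(C_2)]$ upon merging two distinct clusters through $e$ must satisfy $R\leq 1$; this follows in the interior case from $\cosh(a+b)\leq 2\cosh(a)\cosh(b)$, and in the three boundary-touching cases from $e^{h_{C_2}}/(2\cosh h_{C_2})\leq 1$, $R=1$, or $R=0$ respectively.

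Finally I would build the coupling. Sample an envelope $\omega_0\sim\mathrm{Bernoulli}(p)$ on the edges of $\Lambda\cup\partial_{ex}\Lambda$ and use Strassen's theorem to produce $\omega^{\eta},\omega^{\eta'}\leq\omega_0$ with the correct field-modified FK marginals; then complete each to a spin configuration by Edwards-Sokal. Let $\mathcal{F}$ be the $\omega_0$-cluster of $\partial_{ex}\Lambda$. On the event $\{\partial_{ex}\Delta\cap\mathcal{F}=\emptyset\}$, every cluster of $\omega^{\eta}$ or $\omega^{\eta'}$ that meets $\Delta$ lies in $\mathcal{F}^c$, so its spin is sampled with weight $e^{h_C\sigma_C}$ and does not see $\eta$. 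By revealing $\mathcal{F}$ first and then invoking the FK domain-Markov property to check that, conditional on $\mathcal{F}$, the two interior field-modified FK measures on $\mathcal{F}^c$ coincide (both become the free-boundary field-modified FK measure on $\mathcal{F}^c$), one can arrange $\omega^{\eta}|_{\mathcal{F}^c}=\omega^{\eta'}|_{\mathcal{F}^c}$ and thus $\sigma_\Delta=\sigma'_\Delta$ on this event. The TV distance is then dominated by $\mathbb{P}(\partial_{ex}\Delta\leftrightarrow\partial_{ex}\Lambda\text{ in }\omega_0)\leq\sum_{x\in\partial_{ex}\Delta,y\in\partial_{ex}\Lambda}e^{-C_3(\beta)|x-y|}$ by a union bound. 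The principal obstacle is precisely this last step: verifying the domain-Markov-type identity for the field-modified FK measure, so that conditioning on the entire boundary cluster leaves the interior measure independent of $\eta$ and the Strassen coupling can be refined to agree on $\mathcal{F}^c$.
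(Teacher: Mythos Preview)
Your proposal is correct and follows essentially the same route as the paper: couple the two spin measures via an exploration of the Edwards--Sokal bond configuration from $\partial_{ex}\Lambda$ (this is the paper's Lemma~\ref{lemcoupleIsing}), dominate the bond marginal by subcritical Bernoulli($1-e^{-2\beta}$) percolation, and apply sharpness. The only cosmetic difference is how the Bernoulli domination is obtained---the paper passes to $\mathbb{P}^{|\vec h|}_{\Lambda,w,\beta,H}$ and uses monotonicity in $H$ to reach $H=\infty$, whereas you verify Holley's single-edge condition directly via the cluster-weight inequalities; the domain-Markov step you flag as the ``principal obstacle'' is routine for random-cluster-type measures and is exactly what the exploration argument delivers.
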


\begin{remark}
Theorem \ref{thmSBG} extends (by the same proof) to \textbf{any} deterministic $\vec{h}\in\mathbb{R}^{\Lambda}$.
\end{remark}

For the RFIM with $\beta\geq0$ and $H$ large, we have the following two theorems.
\begin{theorem}\label{thmABB}
Consider the RFIM with bimodal field and with $d\geq 1$. Let $\Delta\subseteq\Lambda$ be finite subsets of $\mathbb{Z}^d$.
For all $0\leq \beta<\infty$, there exists $H_2(\beta)>0$ such that for each $H>H_2(\beta)$,
\begin{equation}\label{eqmixingB1}
\sup_{\eta,\eta^{\prime}\in\{-1,+1\}^{\partial_{ex} \Lambda}}TV\left(P^{\vec{h}}_{\Lambda,\eta,\beta,H}(\sigma_{\Delta}\in \cdot), P^{\vec{h}}_{\Lambda,\eta^{\prime},\beta,H}(\sigma_{\Delta}\in \cdot)\right)\leq \sum_{x\in\partial_{ex}\Delta, y\in\partial_{ex}\Lambda}e^{-C_4|x-y|}
\end{equation}
for each realization $\vec{h}\in\{-1,+1\}^{\Lambda}$, where $C_4\in (0,\infty)$ is a constant (depending only on~$d$).
\end{theorem}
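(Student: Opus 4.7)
The plan is to work in the Edwards--Sokal (random-cluster) representation of the RFIM on $\Lambda$ with a ghost vertex, and to compare the FK measures associated with the two boundary conditions $\eta$ and $\eta'$. In this representation each nearest-neighbor edge inside $\Lambda$ carries FK parameter $p_b=1-e^{-2\beta}$; each site $u\in\Lambda$ has a ``field edge'' to a $\pm$-ghost (of sign $\mathrm{sign}(h_u)$) with parameter $p_g=1-e^{-2H}$; and $\partial_{ex}\Lambda$ is further wired to $\pm$-ghosts according to $\eta$. When $H$ is large, $p_g$ is close to $1$, so almost every vertex has an open field edge and is \emph{pinned} by the field alone; boundary information can propagate only along paths of \emph{ghost-free} vertices (those whose field edge is closed), which at large $H$ are exponentially rare.

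\medskip

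\noindent\textbf{Step 1 (coupling).} First I would construct, from a single random-cluster sample, a joint coupling of $P^{\vec h}_{\Lambda,\eta,\beta,H}(\sigma_\Delta\in\cdot)$ and $P^{\vec h}_{\Lambda,\eta',\beta,H}(\sigma_\Delta\in\cdot)$. The key observation is that in the Edwards--Sokal coupling the spin at $v\in\Delta$ agrees under the two boundary conditions whenever the FK cluster $\mathcal{C}_v$ of $v$ in open real edges of $\Lambda$ either (a)~contains some vertex with an open field edge---so that $\sigma_v$ is forced by that field-pin independently of $\eta$---or (b)~does not reach $\partial_{ex}\Lambda$. A union bound then yields
\[
TV\bigl(P^{\vec h}_{\Lambda,\eta,\beta,H}(\sigma_\Delta\in\cdot),\,P^{\vec h}_{\Lambda,\eta',\beta,H}(\sigma_\Delta\in\cdot)\bigr)\;\le\;\sum_{x\in\partial_{ex}\Delta,\,y\in\partial_{ex}\Lambda}\mathbb{P}\!\left(x\stackrel{\mathrm{gf}}{\longleftrightarrow}y\right),
\]
where $\stackrel{\mathrm{gf}}{\longleftrightarrow}$ denotes connection by an open real path all of whose vertices are ghost-free.

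\medskip

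\noindent\textbf{Step 2 (subcritical bound).} Next I would estimate the ghost-free two-point function. The event that a prescribed set $S$ of vertices all have closed field edges is decreasing in the extended FK configuration, so the standard stochastic domination $\mathrm{Bernoulli}(p_g^{\ast})\preceq\mathrm{FK}_{q=2}$ applied edgewise to the ghost edges, with $p_g^{\ast}:=p_g/(p_g+2(1-p_g))$, gives
\[
\mathbb{P}(S\text{ all ghost-free})\;\le\;\alpha_H^{|S|},\qquad \alpha_H:=\frac{2e^{-2H}}{1+e^{-2H}}.
\]
Since $x\stackrel{\mathrm{gf}}{\longleftrightarrow}y$ requires all vertices of some self-avoiding path from $x$ to $y$ to be ghost-free, and there are at most $(2d)^n$ such paths of length $n$, a Peierls-type bound gives $\mathbb{P}(x\stackrel{\mathrm{gf}}{\longleftrightarrow}y)\le\sum_{n\ge|x-y|}(2d\,\alpha_H)^n$. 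Choosing $H_2(\beta)$ large enough that $\alpha_{H_2}\le 1/(4d)$ (which is possible with a threshold depending only on $d$) makes the series geometric and yields $\mathbb{P}(x\stackrel{\mathrm{gf}}{\longleftrightarrow}y)\le e^{-C_4|x-y|}$ with $C_4>0$ depending only on~$d$; summing over $(x,y)$ delivers~\eqref{eqmixingB1}.

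\medskip

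\noindent\textbf{Main obstacle.} The principal subtlety is Step~1: the FK measures induced by $\eta$ and $\eta'$ differ in how $\partial_{ex}\Lambda$ is wired to the $\pm$-ghosts, so they are not a priori marginals of a single FK sample. I would handle this by realising both as marginals of a larger random-cluster measure on the extended graph with \emph{free} boundary, and then using FKG/Holley monotonicity (valid for $q\ge 1$) to reduce the comparison to the extremal constant boundary conditions, with the disagreement localised in clusters that reach $\partial_{ex}\Lambda$. A secondary point is that the weight $2^{k(\omega)}$ positively couples ghost and real edges, but only the edgewise marginal domination $\alpha_H$ is needed for the path count in Step~2, so no finer control of correlations between ghost and real bonds is required.
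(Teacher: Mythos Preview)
Your approach via the Edwards--Sokal representation and ``ghost-free'' paths is genuinely different from the paper's, but Step~1 has a gap that cannot be repaired as written. A quick consistency check shows why: your Step~2 bound on ghost-free connectivity depends only on~$H$ and~$d$ (through $\alpha_H=2e^{-2H}/(1+e^{-2H})$), with no $\beta$-dependence, so if Step~1 were valid you would obtain the theorem with a threshold $H_2$ depending only on~$d$. But for $d\ge 3$, large~$\beta$, and any fixed~$H$, the bimodal RFIM has two distinct Gibbs states (Bricmont--Kupiainen), so the total-variation distance between $+$ and $-$ boundary conditions does not decay at all. The coupling you describe therefore cannot exist. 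The specific flaw is in case~(a) of your dichotomy: an open field edge on the cluster of~$v$ pins $\sigma_v$ only for a \emph{fixed} FK configuration, whereas the FK laws corresponding to $\eta$ and $\eta'$ are genuinely different (each is a conditioning of the single-ghost measure on a different decreasing event, and the two-ghost constraint that $g^+$ and $g^-$ are disconnected destroys the FKG lattice condition). Dominating both by $\mathbb{P}^{|\vec h|}_{\Lambda,w}$ does not help here: an open ghost edge in the dominating sample need not be open in $\omega^\eta$ or $\omega^{\eta'}$, so it pins neither spin, and the free-boundary-plus-Holley fix you sketch does not produce a common sample with shared field pins.

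The paper avoids the FK representation entirely for this theorem. It couples the two Ising measures \emph{directly at the spin level}, revealing spins one by one from the boundary inward with a common uniform random variable at each site, and tracks the disagreement set $\{x:\sigma^\eta_x\neq\sigma^{\eta'}_x\}$. A worst-case one-site estimate gives
\[
P^{\vec h}_{\Lambda,\eta,\beta,H}\bigl(\sigma_x=\mathrm{sign}(h_x)\,\big|\,\text{any configuration on the neighbors of }x\bigr)\ \ge\ a(\beta,H):=\frac{e^{-2d\beta+H}}{e^{-2d\beta+H}+e^{2d\beta-H}},
\]
so the disagreement percolation is stochastically dominated by i.i.d.\ Bernoulli site percolation with parameter $1-a(\beta,H)^2$, which is subcritical once $H>H_2(\beta)$. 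The $\beta$-dependence of $H_2$ enters through~$a$ and, by the argument above, is unavoidable.
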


\begin{theorem}\label{thmABG}
Consider the RFIM with Gaussian field and with $d\geq 1$.  Let $\Delta\subseteq\Lambda$ be finite subsets of $\mathbb{Z}^d$. For all $0\leq \beta<\infty$, there exists $H_3(\beta)>0$ such that for each $H>H_3(\beta)$,
\begin{equation}\label{eqmixingG1}
\sup_{\eta,\eta^{\prime}\in\{-1,+1\}^{\partial_{ex} \Lambda}}TV\left(P^{\vec{h}}_{\Lambda,\eta,\beta,H}(\sigma_{\Delta}\in \cdot), P^{\vec{h}}_{\Lambda,\eta^{\prime},\beta,H}(\sigma_{\Delta}\in \cdot)\right)\leq \sum_{x\in\partial_{ex}\Delta, y\in\partial_{ex}\Lambda}C_6(x,\vec{h})e^{-C_5|x-y|}
\end{equation}
for almost all realizations $\vec{h}\in\mathbb{R}^{\mathbb{Z}^d}$ of $\vec{H}$, where $C_6(x,\vec{h})\in(0,\infty)$ depends only on $x$ and $\vec{h}$ (and $d$) and $C_5\in (0,\infty)$ is a constant (depending only on $d$).
\end{theorem}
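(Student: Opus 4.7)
The plan is to reduce Theorem \ref{thmABG} to the bimodal result Theorem \ref{thmABB} by decomposing the Gaussian field into a "strongly pinning" part and a "weakly pinning" part, and to control the latter via a subcritical percolation argument. The essential difference from the bimodal case is that the Gaussian field is unbounded in magnitude on the small side: no matter how large $H$ is chosen, a positive density of sites have $H|h_u|$ small, so the uniform pinning used in Theorem \ref{thmABB} is not available. But when $H$ is large, such weakly pinned sites are sparse and their clusters can be controlled.

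First I would fix a threshold $M=M(\beta)$ large enough that the random cluster coupling used in Theorem \ref{thmABB} produces exponential mixing whenever every site has effective field magnitude at least $M$; this $M$ depends only on $\beta$ and $d$. Call a site $u$ \emph{bad} if $H|h_u|<M$ and \emph{good} otherwise. Since the $H_u$ are i.i.d.\ standard Gaussian, bad sites form a Bernoulli site percolation on $\mathbb{Z}^d$ with density $\nu([-M/H,M/H])\to 0$ as $H\to\infty$. I would then choose $H_3(\beta)$ so that, for $H>H_3(\beta)$, this density lies strictly below the critical threshold for Bernoulli site percolation on $\mathbb{Z}^d$. Standard subcritical percolation results then give that the connected cluster of bad sites through any fixed vertex is almost surely finite, with an exponentially decaying diameter distribution at rate depending only on $d$.

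Second I would run the coupling argument between $P^{\vec h}_{\Lambda,\eta,\beta,H}$ and $P^{\vec h}_{\Lambda,\eta',\beta,H}$ in the Edwards--Sokal / ghost-site random cluster representation introduced for the Kert\'esz line (Section \ref{secKer}). The key quantitative input is that at a good site the edge to the ghost is open with probability close to one, so good sites effectively decouple their spin from their neighbors in the cluster picture. Boundary influence on $x$ can therefore reach $x$ only through a path of bad sites that connects $\partial_{ex}\Lambda$ to the bad cluster $\mathcal{C}(x,\vec h)$ containing or neighboring $x$. Applying the bimodal-style coupling on the complement of bad clusters and a union bound over such paths yields a total-variation estimate of the form $\sum_{x\in\partial_{ex}\Delta,\,y\in\partial_{ex}\Lambda} C_6(x,\vec h)e^{-C_5|x-y|}$: the rate $C_5$ is the subcritical exponential decay rate for bad-site percolation, while the prefactor $C_6(x,\vec h)$ absorbs the diameter of $\mathcal{C}(x,\vec h)$, which is a.s.\ finite and depends only on $x$ and the field.

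The main obstacle is making the implication ``good site $\Rightarrow$ effectively pinned'' quantitatively precise in a way compatible with the random cluster coupling, since for Gaussian fields one cannot simply invoke the uniform bound on $|H_u|$ used in the bimodal proof. This is handled by first conditioning on the bad-site configuration and running the coupling on the good component, producing errors controlled by a Peierls-type sum over bad paths; a secondary technicality is verifying that the Borel--Cantelli step which gives a.s.\ finiteness of $C_6(x,\vec h)$ does not deteriorate when summed against $e^{-C_5|x-y|}$, which follows because the tail of $|\mathcal{C}(x,\vec h)|$ has a strictly faster exponential rate than the desired $C_5$ once $H$ is taken large enough.
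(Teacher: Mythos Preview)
Your intuition---decompose into strongly and weakly pinned sites and control disagreement by a subcritical site percolation---is exactly the paper's, but the technical vehicle you name has a genuine gap. You propose to run the Edwards--Sokal / ghost-site random-cluster coupling of Section~\ref{secKer}. That coupling (Lemmas~\ref{lemcoupleRC} and~\ref{lemcoupleIsing}) bounds the total variation by the $\mathbb{P}^{|\vec h|}_{\Lambda,w,\beta,H}$-probability of an \emph{internal}-edge path from $\partial_{in}\Delta$ to $\partial_{ex}\Lambda$. But internal edges are not suppressed by large $H$: as $H\to\infty$ every site joins the ghost cluster, the factor $q^{\mathcal K}$ becomes constant, and the internal-edge marginal is exactly Bernoulli$(1-e^{-2\beta})$. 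For $\beta>\beta_P(d)$ this is supercritical and the bound is useless. A good site having its ghost edge open fixes its spin, but does nothing to close the adjacent internal edges, so the claim that ``boundary influence can reach $x$ only through a path of bad sites'' is false in this representation.

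The paper therefore abandons random-cluster entirely in the large-field regime and uses a direct spin coupling (Lemma~\ref{lemCoupleLF}): reveal $(\sigma^\eta_x,\sigma^{\eta'}_x)$ sequentially from the boundary, set $S_x=\mathbf{1}\{\sigma^\eta_x\neq\sigma^{\eta'}_x\}$, and use the worst-case pinning estimate~\eqref{eqsignS} to dominate the disagreement process $\mathbf{S}$ by an independent site percolation $\mathbf{T}^{\vec h}$ with $p_x=1-a^2(\beta,H,|h_x|)$. This is the ``bimodal-style coupling'' you allude to, but it is a spin coupling, not a random-cluster one. For Gaussian $\vec H$ the paper then passes to the \emph{annealed} law of $\mathbf{T}^{\vec H}$, which by~\eqref{eqTopen}--\eqref{eqsignSG} is dominated by homogeneous Bernoulli site percolation at density below $p_c^s(d)/2$ once $H>H_3(\beta)$; this gives $\bar P^{in}(x\leftrightarrow y)\le e^{-2C_5|x-y|}$, and a one-line Fubini argument (Lemma~\ref{lemdecayque}) turns it into the quenched bound with $C_6(x,\vec h):=\sum_y e^{C_5|x-y|}P^{in}_{\mathbb{Z}^d,\vec h}(x\leftrightarrow y)<\infty$ almost surely. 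This annealed-then-Fubini route is shorter than your proposed conditioning on bad-cluster geometry plus Borel--Cantelli on cluster diameters, and it sidesteps the need to make precise how the coupling behaves across the boundary of a bad cluster.
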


\begin{remark}\label{remgen}
Theorem \ref{thmABG} extends (by essentially the same proof) to the RFIM with a general common distribution $\nu$ satisfying
\begin{equation}
\nu(\{0\})=Prob(H_x=0)<p_c^s(d),
\end{equation}
where  $p_c^s(d)$ is the critical probability for independent Bernoulli site percolation on $\mathbb{Z}^d$.
\end{remark}

\begin{remark}\label{rembound}
The proofs of Theorem \ref{thmABB} (see Lemma \ref{lemCoupleLF}) and Theorem \ref{thmABG} (see \eqref{eqTopen} and \eqref{eqsignSG}) show the following: $H_2(\beta)$ can be any number satisfying
\begin{equation}
1-\Big(\frac{e^{-2d\beta+H_2(\beta)}}{e^{-2d\beta+H_2(\beta)}+e^{-2d\beta-H_2(\beta)}}\Big)^2<p_c^s(d)
\end{equation}
and $H_3(\beta)$ can be any number such that for some $\delta>0$
\begin{equation}
Prob(|H_x|<\delta)+1-\Big(\frac{e^{-2d\beta+H_3(\beta)\delta}}{e^{-2d\beta+H_3(\beta)\delta}+e^{-2d\beta-H_3(\beta)\delta}}\Big)^2<p_c^s(d).
\end{equation}
\end{remark}

\subsection{Ideas of the proofs}
The proofs of Theorems \ref{thmcor1}-\ref{thmSBG} are based first on nontriviality of the Kert\'{e}sz line for FK percolation when $\beta<\beta_c$ (see Theorem \ref{thmKerIsing}) and second on exponential decay in the homogeneous Ising model with constant magnetic field under the Kert\'{e}sz line (see Theorem \ref{thmPCdecay}). The main idea for the proofs of Theorems \ref{thmABB}-\ref{thmABG} is that each Ising spin will follow with high probability the sign of its external field when the field strength is large. The coupling of Ising measures (and their random cluster representations) with different boundary conditions also plays an important role --- see Lemmas \ref{lemcoupleRC}, \ref{lemcoupleIsing} and \ref{lemCoupleLF} .

\section{Location of the Kert\'{e}sz line}\label{secKer}
In this section, we consider the $q$-state Potts model and random cluster models. Note that Potts models are generalizations of the Ising model. We show that the Kert\'{e}sz line is located at a strictly positive magnetic field strength when (\lowerRomannumeral{1}) temperature is larger than the critical temperature for $d=q=2$ and when (\lowerRomannumeral{2}) temperature is sufficiently large for general $d\geq 1$ and $q>1$.

For any $q\in \{2,3,\dots\}$ and finite $\Lambda\subseteq \mathbb{Z}^d$,  the Potts model at inverse temperature $\beta$ on $\Lambda$ with boundary condition $\eta\in\{1,2,\dots,q\}^{\partial_{ex} \Lambda}$ and external field $H\geq0$ is defined by the probability measure $P_{\Lambda,\eta,\beta,H}$ on $\{1,2,\dots,q\}^{\Lambda}$ such that for each $\sigma\in\{1,2,\dots,q\}^{\Lambda}$,
\begin{equation}\label{eqPottsdef}
P_{\Lambda,\eta,\beta,H}(\sigma)=\frac{1}{Z_{\Lambda,\eta,\beta,H}}\exp{\left[2\beta\sum_{\{u,v\}}\delta_{\sigma_u,\sigma_v}+2\beta\sum_{e=\{u,v\}:u\in\Lambda, v\in\partial_{ex}\Lambda}\delta_{\sigma_u,\eta_v}+2H\sum_{u\in \Lambda}\delta_{\sigma_u,1}\right]},
\end{equation}
where the first sum is over all nearest neighbor pairs in $\Lambda$, $Z_{\Lambda,\eta,\beta,H}$ is the partition function and $\delta_{i,j}$ is the Kronecker delta. Note that our parametrization differs from the usual one (see, e.g., Section 1.3 of \cite{Gri06}) by a factor of $2$ for both $\beta$ and $H$; with this choice, the Potts model with $q=2$ corresponds to the Ising model with inverse temperature $\beta$ and constant external field $H$. In this section we only consider constant external field, i.e., each vertex has the same external field of strength $H$. 

Before we define the random cluster model, we need some notation. With vertex set $\mathbb{Z}^d$, we write $\mathbb{E}^d$ for the set of nearest neighbor edges of $\mathbb{Z}^d$. For $\Lambda\subseteq \mathbb{Z}^d$, define $\Lambda^C:=\mathbb{Z}^d\setminus\Lambda$,
\begin{equation}
\partial_{in}\Lambda:=\{z\in \mathbb{Z}^d: z\in \Lambda, z \text{ has a nearest neighbor in }\Lambda^C \},
\end{equation}

\begin{equation}
\partial_{ex}\Lambda:=\{z\in \mathbb{Z}^d: z\notin \Lambda, z \text{ has a nearest neighbor in }\Lambda \},
\end{equation}

\begin{equation}
\overline{\Lambda}:=\Lambda\cup \partial_{ex}\Lambda.
\end{equation}
We let $\mathscr{B}(\Lambda)$ be the set of all $\{z,w\}\in \mathbb{E}^d$ with $z,w\in \Lambda$, and $\overline{\mathscr{B}}(\Lambda)$ be the set of all $\{z,w\}$ with $z$ or $w\in \Lambda$. We will consider the extended graph $G=(V,E)$ where $V=\mathbb{Z}^d\cup\{g\}$ ($g$ is usually called the \textbf{ghost vertex} \cite{Gri67}) and $E$ is the set $\mathbb{E}^d \cup \{\{z,g\}:z\in \mathbb{Z}^d\}$. The edges in $\mathbb{E}^d$ are called \textbf{internal edges} while the edges in $\{\{z,g\}:z\in \mathbb{Z}^d\}$ are called \textbf{external edges}. We let $\mathscr{E}(\Lambda)$ be the set of all external edges with an endpoint in $\Lambda$, i.e.,
\[\mathscr{E}(\Lambda):=\left\{\left\{z,g\right\}:z\in \Lambda\right\}.\]

The random cluster model at inverse temperature $\beta$ on $\Lambda\subseteq \mathbb{Z}^d$ with boundary condition $\rho\in\{0,1\}^{\mathscr{B}(\Lambda^C)\cup\mathscr{E}(\Lambda^C)}$ and with external field $H\geq0$ is defined by the probability measure $\mathbb{P}_{\Lambda,\rho,\beta,H}$ on $\{0,1\}^{\overline{\mathscr{B}}(\Lambda)\cup\mathscr{E}(\Lambda)}$ such that for any $\omega\in\{0,1\}^{\overline{\mathscr{B}}(\Lambda)\cup\mathscr{E}(\Lambda)}$,
\begin{align}\label{eqRCdef}
\mathbb{P}_{\Lambda,\rho,\beta,H}(\omega)=\frac{q^{\mathcal{K}\left(\Lambda, (\omega\rho)_{\Lambda}\right)}}{\hat{Z}_{\Lambda,\rho,\beta,H}}\prod_{e\in\overline{\mathscr{B}}(\Lambda)}(1-e^{-2\beta})^{\omega_e}(e^{-2\beta})^{1-\omega_e}\prod_{e\in\mathscr{E}(\Lambda)}(1-e^{-2H})^{\omega_e}(e^{-2H})^{1-\omega_e},
\end{align}
where $(\omega\rho)_{\Lambda}$ denotes the configuration which coincides with $\omega$ on $\overline{\mathscr{B}}(\Lambda)\cup\mathscr{E}(\Lambda)$ and with $\rho$ on $\mathscr{B}(\Lambda^C)\cup\mathscr{E}(\Lambda^C)$, $\mathcal{K}\left(\Lambda, (\omega\rho)_{\Lambda}\right)$ denotes the number of clusters in $(\omega\rho)_{\Lambda}$ which intersect $\Lambda$ and do not contain $g$, and $\hat{Z}_{\Lambda,\rho,\beta,H}$ is the partition function. An edge $e$ is said to be \textbf{open} if $\omega_e=1$, otherwise it is said to be \textbf{closed}. $\mathbb{P}_{\Lambda,f,\beta, H}$ (respectively, $\mathbb{P}_{\Lambda,w,\beta,H}$) denotes the probability measure with free (respectively, wired) boundary conditions, i.e., $\rho\equiv 0$ (respectively, $\rho\equiv 1$) in \eqref{eqRCdef}.

The Potts models and random cluster models are related by the Edwards-Sokal coupling~\cite{ES88}; see also Sections 1.4 and 4.6 of \cite{Gri06}. Since we are mainly interested in the case $q=2$ (the Ising model) in this paper, we suppress the explicit reference to $q$ in this section.

For any $u,v\in \mathbb{Z}^d\cup\{g\}$, we write $u\longleftrightarrow v$ for the event that there is a path of open edges that connects $u$ and $v$, i.e., a path $u=z_0, z_1, \ldots, z_n=v$ with $e_i=\{z_i, z_{i+1}\}\in E$ and $\omega(e_i)=1$ for each $0\leq i<n$. For any $u,v\in \mathbb{Z}^d$, we write $u\overset{W}\longleftrightarrow v$ if $u\longleftrightarrow v$ and each vertex on this path is in $W\subseteq\mathbb{Z}^d\cup\{g\}$. For any $A, B\subseteq \mathbb{Z}^d\cup\{g\}$, we write $A\longleftrightarrow B$ if there is some $u\in A$ and $v\in B$ such that $u\longleftrightarrow v$. $A\centernot\longleftrightarrow B$ denotes the complement of $A\longleftrightarrow B$.

By the FKG lattice property,  $\mathbb{P}_{\Lambda,w,\beta, H}$ (with wired boundary conditions) has a unique infinite volume limit as $\Lambda\rightarrow \mathbb{Z}^d$, which we denote by $\mathbb{P}_{\mathbb{Z}^d,w,\beta,H}$. Let $\theta(\beta,H)$ be the percolation probability,
\begin{equation}
\theta(\beta,H):=\mathbb{P}_{\mathbb{Z}^d,w,\beta,H}(0\overset{\mathbb{Z}^d}{\longleftrightarrow}\infty),
\end{equation}
where $\{0\overset{\mathbb{Z}^d}{\longleftrightarrow}\infty\}$ is the event that the origin is in an infinite open cluster in $\mathbb{Z}^d$ (i.e., only using internal edges). The critical inverse temperature (with $H$=0) is defined by
\begin{equation}
\beta_c(d):=\sup\{\beta\geq0: \theta(\beta,0)=0\}.
\end{equation}
The Kert\'{e}sz line (see \cite{Ker89,BGLRS08,RW08}) is the function
\begin{equation}
H_K(\beta):=\sup\{H\geq 0: \theta(\beta,H)=0\}.
\end{equation}
Note that $\mathbb{P}_{\mathbb{Z}^d,w,\beta,H}$ is stochastically increasing in $\beta$ and $H$ (i.e., in the FKG sense), so $H_K(\beta)$ is decreasing in $\beta$. It is clear that $H_K(\beta)=0$ for each $\beta>\beta_c(d)$. It is also easy to see that $H_K(\beta)=\infty$ if $\beta< \beta_P(d)$ where
\begin{equation}\label{eqbetaPdef}
1-e^{-2\beta_P(d)}=p_c^b(d)
\end{equation}
and $p_c^b(d)$ is the critical probability for independent Bernoulli bond percolation on $\mathbb{Z}^d$. It follows from Theorem 1.4 of \cite{CS00} and the Edwards-Sokal coupling that $H_K(\beta_P(d))=\infty$ and $H_K(\beta)<\infty$ for each $\beta>\beta_P(d)$; to see this, note that the percolation of state~$1$ in the $q$-state Potts model with $\beta\in[0,\infty)$ and an external field $H\in[0,\infty)$ applied to state~$1$ is stochastically dominated by (respectively, dominates) an independent Bernoulli site percolation on $\mathbb{Z}^d$ with $p<1$ (respectively, with $\tilde{p}=\tilde{p}(H)<1$ where $\tilde{p}(H)\rightarrow1$ as $H\rightarrow\infty$). A similar argument also implies that $\beta_P(d)<\beta_c(d)$ (see also Theorem 3.1 of \cite{Gri95}). Using arguments similar to those developed in \cite{BGK93}, we will show the following theorem about the Kert\'{e}sz line.

\begin{theorem}\label{thmKer}
Suppose $d\geq 1$ and $q\in \{2,3,\dots\}$. We have
\begin{equation}
H_K(\beta_c(d))=0.
\end{equation}
Moreover, there exists $\epsilon_d\in(0,\beta_c(d)-\beta_P(d))$ such that
\begin{equation}\label{eqHKrange}
H_K(\beta_P(d)+\epsilon_d)\in(0,\infty).
\end{equation}
\end{theorem}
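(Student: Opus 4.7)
My plan is to prove the two parts of the theorem separately, using stochastic monotonicity of $\mathbb{P}_{\Lambda,w,\beta,H}$ in both $\beta$ and $H$ (from the FKG lattice property on the extended graph) together with an explicit description of the marginal on internal edges. Integrating out the external edges --- using that, conditional on $\omega^{in}$, the events ``internal cluster $C$ is connected to $g$'' are independent across clusters, each with probability $(1-e^{-2H|C|})/(1+(q-1)e^{-2H|C|})$ --- yields
\begin{equation*}
\mathbb{P}^{\mathrm{marg}}_{\Lambda,w,\beta,H}(\omega^{in}) \;\propto\; \Bigl[\prod_{e\in\overline{\mathscr{B}}(\Lambda)}(1-e^{-2\beta})^{\omega_e}(e^{-2\beta})^{1-\omega_e}\Bigr]\prod_{C}\bigl[1+(q-1)e^{-2H|C|}\bigr],
\end{equation*}
where $C$ runs over internal clusters of $\omega^{in}$ that do not reach the wired exterior. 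At $H=0$ this reduces to the usual random cluster model; for $H>0$ the cluster factor is smaller than $q$ and tends to $1$ for large clusters.

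To establish $H_K(\beta_c(d))=0$, I need $\theta(\beta_c,H)>0$ for every $H>0$. If the pure random cluster model has a first-order transition at $\beta_c$, then $\theta(\beta_c,0)>0$ and monotonicity in $H$ gives the conclusion. In the continuous case my plan is to use the Edwards--Sokal coupling: the Potts magnetization at $(\beta_c,H)$ satisfies $\langle\delta_{\sigma_0,1}\rangle-1/q=\tfrac{q-1}{q}\mathbb{P}_{\mathbb{Z}^d,w,\beta_c,H}(0\leftrightarrow g)$, which is strictly positive for $H>0$. Under wired boundary conditions, $\{0\leftrightarrow g\}$ decomposes disjointly into $\{0\leftrightarrow_{\mathrm{int}}\infty\}$ and $\{|C_0|<\infty,\,C_0\leftrightarrow g\}$, and the conditional probability of the latter given $\omega^{in}$ with $|C_0|=n<\infty$ is $(1-e^{-2Hn})/(1+(q-1)e^{-2Hn})$. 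Adapting the cluster-size arguments of \cite{BGK93}, one shows that the second contribution cannot absorb all of the positive magnetization at $\beta_c$, forcing $\theta(\beta_c,H)>0$.

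For $H_K(\beta_P+\epsilon_d)\in(0,\infty)$, the finiteness follows already from Theorem~1.4 of \cite{CS00} and the Edwards--Sokal coupling, as the excerpt notes; the content is $H_K(\beta_P+\epsilon_d)>0$. The plan is perturbative: for $\beta\in(\beta_P,\beta_c)$, sharpness of the phase transition for the pure random cluster model gives exponential decay of two-point connectivities at $H=0$. The Radon--Nikodym derivative $\prod_C[1+(q-1)e^{-2H|C|}]/q^{\mathcal{K}}$ lies in $(0,1]$ and tends to $1$ pointwise as $H\to 0$; controlling it cluster by cluster against the exponentially small connectivity shows that for $H$ sufficiently small, the origin's cluster remains finite almost surely, so $\theta(\beta,H)=0$ for $\beta$ in a right-neighborhood of $\beta_P$.

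The main obstacle I expect is the continuous-transition sub-case of the first assertion: the Edwards--Sokal decomposition only gives the a priori upper bound $\theta(\beta_c,H)\le\tfrac{q}{q-1}(\langle\delta_{\sigma_0,1}\rangle-1/q)$, which is in the wrong direction, so one cannot simply read off percolation from positive magnetization. The task is to show that positive Potts magnetization cannot be entirely accounted for by finite internal clusters connected to $g$ via external edges, which requires a finer cluster estimate along the lines of \cite{BGK93}. In the planar Ising case $d=q=2$, an alternative duality-based argument (encoded in Theorem~\ref{thmKerIsing}) should provide a more direct proof.
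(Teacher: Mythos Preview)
Your approach diverges sharply from the paper's, and the gap you yourself flag in the continuous-transition sub-case is real and not closed by anything in your outline.

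The paper does \emph{not} attempt to show $\theta(\beta_c,H)>0$ directly via magnetization or cluster sizes. Instead it adapts the differential-inequality technology of \cite{BGK93} to the two-parameter setting $\vec{p}=(1-e^{-2\beta},1-e^{-2H})$: Proposition~\ref{propcomp1} shows that $\partial\theta_n/\partial p_1$ and $\partial\theta_n/\partial p_2$ are comparable up to continuous positive factors, and Proposition~\ref{propcomp2} then says that for any $\vec{p}\in(0,1)^2$ and any two directions $\vec{e},\vec{f}$ in a \emph{full} subset of the unit circle (in particular, including both the pure-$\beta$ and pure-$H$ directions), $\theta(\vec{p}+c_1\epsilon\vec{e})\le\theta(\vec{p}+\epsilon\vec{f})\le\theta(\vec{p}+c_2\epsilon\vec{e})$. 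Both assertions of the theorem are then one-line contradictions: if $H_K(\beta_c)>0$, pick $H_0\in(0,H_K(\beta_c))$ so that $\theta(\beta_c,H_0)=0$, and use the comparison to push this zero slightly in the $\beta$-direction, contradicting $\theta(\beta_c+\epsilon_1,0)>0$; symmetrically, if $H_K(\beta_P+\epsilon)=0$ for all $\epsilon>0$, push a positive $\theta$ value backward to contradict $H_K(\beta_P)=\infty$.

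Against this, your plan for $H_K(\beta_c)=0$ tries to infer internal percolation from positive Potts magnetization. As you note, the Edwards--Sokal identity gives only $\theta(\beta_c,H)\le \tfrac{q}{q-1}(\langle\delta_{\sigma_0,1}\rangle-1/q)$, which is the wrong inequality; and the sentence ``Adapting the cluster-size arguments of \cite{BGK93}, one shows that the second contribution cannot absorb all of the positive magnetization'' is not an argument---it is precisely the missing step. There is no mechanism in your outline that prevents, at $\beta_c$ with continuous transition, the entire magnetization from coming through finite internal clusters connected to $g$. Indeed, for small $H$ the typical internal cluster at $\beta_c$ can be large (polynomial tails), so the external-edge contribution $(1-e^{-2H|C_0|})/(1+(q-1)e^{-2H|C_0|})$ conditioned on $|C_0|<\infty$ can already be of the same order as the magnetization, and you have no way to separate the two. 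The paper's comparison argument completely sidesteps this by never looking at the magnetization at all.

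Your perturbative plan for the second assertion is closer in spirit to something workable, but the Radon--Nikodym control you propose is delicate: the product $\prod_C[1+(q-1)e^{-2H|C|}]/q$ involves infinitely many clusters in the thermodynamic limit, and ``tends to $1$ pointwise as $H\to 0$'' for each factor does not by itself control the product uniformly in $\Lambda$. One would need at least an exponential-moment bound on $\sum_C |C|$ restricted to clusters touching a fixed box, which you have not supplied. Here again the paper's route via Proposition~\ref{propcomp2} is both shorter and avoids any such estimate.
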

Recall that $\Lambda_n:=[-n,n]^d$. Let $\vec{p}=(p_1,p_2):=(1-e^{-2\beta},1-e^{-2H})$.  Define
\begin{equation}
\theta_n(\vec{p}):=\mathbb{P}_{\Lambda_n,w,\beta,H}(0 \overset{\mathbb{Z}^d}{\longleftrightarrow} \partial_{in}\Lambda_n).
\end{equation}
In order to prove Theorem \ref{thmKer}, we will use the following two propositions.
\begin{proposition}\label{propcomp1}
Suppose $d\geq 1$ and $q>1$. There exist continuous functions $\alpha,\gamma:(0,1)^2\rightarrow(0,\infty)$ and $N>0$ such that
\begin{equation}
\alpha(\vec{p})\frac{\partial \theta_n}{\partial p_1}\leq \frac{\partial \theta_n}{\partial p_2}\leq \gamma(\vec{p})\frac{\partial \theta_n}{\partial p_1}
\end{equation}
for all $\vec{p}\in(0,1)^2$, all $n\geq N$.
\end{proposition}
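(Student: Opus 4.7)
The plan is to apply a Russo-type differentiation formula to the density~\eqref{eqRCdef} and then compare the resulting edge-by-edge contributions via a local modification argument in the style of~\cite{BGK93}.

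Set $\theta_n(\vec p)=\mathbb{P}_{\Lambda_n,w,\beta,H}(A)$ with $A:=\{0\overset{\mathbb{Z}^d}{\longleftrightarrow}\partial_{in}\Lambda_n\}$. Using the elementary identity $\partial_{p_i}\log[p_i^{\omega_e}(1-p_i)^{1-\omega_e}]=(\omega_e-p_i)/[p_i(1-p_i)]$ and that $\partial_{p_i}\log\hat Z_{\Lambda_n,w,\beta,H}$ equals the corresponding expectation, a direct computation yields
\[
\frac{\partial\theta_n}{\partial p_1}=\frac{1}{p_1(1-p_1)}\sum_{e\in\overline{\mathscr{B}}(\Lambda_n)}\text{Cov}(\mathbf{1}_A,\omega_e),\qquad \frac{\partial\theta_n}{\partial p_2}=\frac{1}{p_2(1-p_2)}\sum_{e\in\mathscr{E}(\Lambda_n)}\text{Cov}(\mathbf{1}_A,\omega_e),
\]
with covariances under $\mathbb{P}_{\Lambda_n,w,\vec p}$. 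Both sums are finite, and every summand is nonnegative since $A$ is increasing and the random cluster measure has the FKG lattice property; the question thus reduces to comparing the two sums.

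Each summand is then reduced to a covariance between $\mathbf{1}_A$ and a connection event. For an external edge $e=\{z,g\}\in\mathscr{E}(\Lambda_n)$, the event $A$ is measurable with respect to internal edges, so $\text{Cov}(\mathbf{1}_A,\omega_e\mid\omega_{-e})=0$, and the random cluster conditional probability formula gives $\mathbb{E}[\omega_e\mid\omega_{-e}]=p_2$ or $p_2/(p_2+q(1-p_2))$ according as $z\longleftrightarrow g$ in $\omega_{-e}$ or not. Hence
\[
\text{Cov}(\mathbf{1}_A,\omega_e)=\frac{p_2(1-p_2)(q-1)}{p_2+q(1-p_2)}\,\text{Cov}\bigl(\mathbf{1}_A,\mathbf{1}_{z\longleftrightarrow g\text{ in }\omega_{-e}}\bigr).
\]
An analogous reduction for an internal edge $e=\{u,v\}\in\overline{\mathscr{B}}(\Lambda_n)$ expresses $\text{Cov}(\mathbf{1}_A,\omega_e)$ in terms of the pivotality of $e$ for $A$ together with the conditional connection status of $u$ and $v$. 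Note that the prefactor above is a continuous, strictly positive function of $\vec p\in(0,1)^2$ since $q>1$.

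The heart of the proof is the edge-by-edge comparison of the two reformulated sums, carried out as in~\cite{BGK93}. For each vertex $z\in\Lambda_n$, the covariance contribution coming from the external edge $\{z,g\}$ is sandwiched, at a multiplicative cost bounded uniformly in $\vec p$ on compact subsets of $(0,1)^2$, by covariance contributions of a bounded number of internal edges incident to (or in a bounded neighborhood of) $z$; the reverse bound is obtained symmetrically. Since each internal edge has $2$ endpoints and each vertex has only $2d$ incident internal edges, summing these local bounds yields continuous functions $\alpha(\vec p)\le\gamma(\vec p)$ from $(0,1)^2$ to $(0,\infty)$ satisfying
\[
\alpha(\vec p)\sum_{e\in\overline{\mathscr{B}}(\Lambda_n)}\text{Cov}(\mathbf{1}_A,\omega_e)\le\sum_{e\in\mathscr{E}(\Lambda_n)}\text{Cov}(\mathbf{1}_A,\omega_e)\le\gamma(\vec p)\sum_{e\in\overline{\mathscr{B}}(\Lambda_n)}\text{Cov}(\mathbf{1}_A,\omega_e),
\]
and absorbing the prefactors $p_i(1-p_i)$ into $\alpha,\gamma$ gives the claim. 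The integer $N$ is taken large enough that every local modification employed remains inside $\Lambda_n$.

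The main obstacle is the local comparison step. External edges do not influence $A$ directly: they enter the random cluster measure only through the cluster-counting exponent $\mathcal{K}$ of~\eqref{eqRCdef}. Translating a ``$z\longleftrightarrow g$'' event into an event governed by a nearby internal edge therefore requires carefully rerouting paths through the ghost into paths in $\mathbb{Z}^d$ while keeping the change in the random cluster weight bounded; making this rerouting strictly local so that the comparison constants do not degrade as $n\to\infty$ is precisely where the technique of~\cite{BGK93} is needed.
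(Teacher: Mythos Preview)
Your overall strategy---differentiate via a Russo-type formula and then compare the edge contributions using the machinery of \cite{BGK93}---is exactly the paper's approach. The covariance identity you derive for external edges is correct and is a clean way to see why the comparison is nontrivial.

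However, you stop precisely where the paper's proof begins to do actual work. The comparison in \cite{BGK93} is not a static ``rerouting'' argument but a dynamical one: one runs two coupled continuous-time Markov chains $X_t\le Y_t$ with the random cluster measure as stationary distribution, and the derivative comparison comes from bounding the probability of certain time-dependent events $V^1,V^2,V^3$ that force a pivotal transition for one edge to become a pivotal transition for another. In the original \cite{BGK93} setting all edges are internal, so one must specify how to modify those events when comparing an internal edge $e=\{u,v\}$ with the external edge $f=\{u,g\}$. The paper's proof does exactly (and only) this: it writes down the modified $V^1$ (clear the box $\langle B_u\rangle$ in $X$, keep $e$ in $Y$), $V^2$ (add the two external edges $f=\{u,g\}$ and $f'=\{v,g\}$ to $X$), and $V^3$ (remove $f$ from $X$ but not from $Y$), and then observes that the rest of the \cite{BGK93} argument goes through verbatim. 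Your last paragraph correctly names this as the main obstacle, but you do not supply the construction; since \cite{BGK93} does not treat a ghost vertex, simply invoking it does not close the gap. Supplying the three modified events above is what is missing.
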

\begin{proof}
The proof is similar to that of Theorem 1 in \cite{BGK93}; here we only explain the changes needed for our setting. Let $\mathbf{p}:=(p_e,e\in\mathscr{B}(\Lambda_n)\cup\mathscr{E}(\Lambda_n))$. Let $\tilde{\mathbb{P}}_{\Lambda_n,\bar{w},\mathbf{p}}$ be the generalized random cluster measure, i.e., with $e^{-2\beta}$ and $e^{-2H}$ replaced by $1-p_e$ in \eqref{eqRCdef}. For each $e=\{u,v\}\in\overline{\mathscr{B}}(\Lambda_n)$ with either $u$ or $v\in \Lambda_n$, let $f:=\{u,g\}$ if $u\in \Lambda_n$ and set $f:=\{v,g\}$ otherwise. Our goal is to show
\begin{equation}
\frac{\partial \theta_n}{\partial p_e}\leq \tilde{\alpha}(\mathbf{p})\frac{\partial \theta_n}{\partial p_f}
\end{equation}
for some continuous function $\tilde{\alpha}:(0,1)^{\overline{\mathscr{B}}(\Lambda_n)\cup\mathscr{E}(\Lambda_n)}\rightarrow (0,\infty)$. Let
\begin{equation}
\langle B_u\rangle:=\big(\mathscr{B}(u+\Lambda_2)\cap\mathscr{B}(\Lambda_n)\big)\cup\big(\mathscr{E}(u+\Lambda_2)\cap\mathscr{E}(\Lambda_n)\big).
\end{equation}
We modify the events $V^i, i=1,2,3$ in \cite{BGK93}, as follows. Let $f^{\prime}:=\{v,g\}$ if $u\in \Lambda_n$ and $f^{\prime}:=\{u,g\}$ otherwise.
\begin{enumerate}[(i)]
\item $V^1$ is the event that during the time-interval $(t,t+1]$, all edges in $\langle B_u\rangle$ which are present  in $X_t$ are removed, and no edges in $\langle B_u\rangle$ are added to $X$; $e$ remains present in Y.

\item $V^2$ is the event that during $(t+1,t+2]$, the edges $f$ and $f^{\prime}$ are added to $X$, but no other edges in $\langle B_u\rangle$ are added to $X$; e remains present in Y.

\item $V^3$ is the event that during $(t+2,t+3]$, the edge $f$ is removed from $X$ but not from~$Y$.
\end{enumerate}
The rest of the proof is the same as that of Theorem 1 in \cite{BGK93}.
\end{proof}

The same proof as in Theorem 2 of \cite{BGK93} yields the following proposition. Note that that theorem holds for each $\mathbf{p}\in(0,1)^K$, as defined in \cite{BGK93}. Let $U:=\{\vec{e}\in\mathbb{R}^2:|\vec{e}|=1\}$. An open subset $V$ of $U$ is called \textbf{full} if $\{(x_1,x_2)\in U: x_1>0 \text{ or }x_2>0\}\subseteq V$.

\begin{proposition}\label{propcomp2}
Suppose $d\geq 1$ and $q>1$. For any $\vec{p}\in (0,1)^2$, there exist $c_1,c_2,\epsilon_0\in(0,\infty)$, and a full subset $V$ of $U$ such that
\begin{equation}
\theta(\vec{p}+c_1\epsilon \vec{e})\leq \theta(\vec{p}+\epsilon\vec{f})\leq \theta(\vec{p}+c_2\epsilon \vec{e})
\end{equation}
for all $0<\epsilon<\epsilon_0$ and $\vec{e},\vec{f}\in V$.
\end{proposition}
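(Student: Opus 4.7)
The plan is to mirror the argument of Theorem 2 in \cite{BGK93}, with Proposition \ref{propcomp1} as the sole nontrivial input; the whole argument is carried out first for the (polynomial, hence smooth) finite-volume quantities $\theta_n$ and then extended to $\theta$ using $\theta_n \downarrow \theta$ as $n \to \infty$.

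Fix $\vec p \in (0,1)^2$. By continuity of $\alpha$ and $\gamma$ I choose a closed box $B \subseteq (0,1)^2$ containing $\vec p$ in its interior and constants $0 < \alpha_0 \le \gamma_0 < \infty$ so that $\alpha_0 \partial_1 \theta_n \le \partial_2 \theta_n \le \gamma_0 \partial_1 \theta_n$ on $B$ for every $n \ge N$. This confines $\nabla \theta_n$ to the closed cone $C := \{(a,b) : a \ge 0,\ \alpha_0 a \le b \le \gamma_0 a\}$ throughout $B$, and I take $V \subseteq U$ to be an open set of unit vectors having uniformly positive inner product with every unit vector in $C$ and containing $\{(x_1,x_2) \in U : x_1 > 0 \text{ or } x_2 > 0\}$; such a $V$ exists because $C$ lies strictly in the open first quadrant.

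For $\vec e, \vec f \in V$ and $\epsilon_0$ small enough that $[\vec p, \vec p + \epsilon_0 \vec e] \cup [\vec p, \vec p + \epsilon_0 \vec f] \subseteq B$, the fundamental theorem of calculus combined with the cone bound yields
\[
m \int_0^\epsilon \partial_1 \theta_n(\vec p + s \vec f)\,ds \,\le\, \theta_n(\vec p + \epsilon \vec f) - \theta_n(\vec p) \,\le\, M \int_0^\epsilon \partial_1 \theta_n(\vec p + s \vec f)\,ds,
\]
with $m, M \in (0,\infty)$ uniform over $V$, and an analogous estimate for $\vec e$. Reconciling the two integrals by a change of variables, using the uniform cone bound on $B$ together with the monotonicity of $\theta_n$ along cone directions, produces constants $c_1 < c_2$ in $(0,\infty)$ such that $\theta_n(\vec p + c_1 \epsilon \vec e) \le \theta_n(\vec p + \epsilon \vec f) \le \theta_n(\vec p + c_2 \epsilon \vec e)$ for all $n \ge N$, $0 < \epsilon < \epsilon_0$, and $\vec e, \vec f \in V$; passing to the limit $n \to \infty$ gives the claim for $\theta$. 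The main technical step is precisely this reconciliation: the two integrals sample $\partial_1 \theta_n$ at different points along different segments, and bridging them requires the cone bound to hold uniformly throughout $B$, which is why Proposition \ref{propcomp1} is formulated with continuous functions on the whole parameter space rather than as a pointwise estimate at $\vec p$.
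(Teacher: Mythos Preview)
The paper does not supply its own proof here; it simply invokes Theorem~2 of \cite{BGK93}, noting that the argument there carries over to the present two-parameter setting. So there is nothing detailed in the paper to compare your sketch against beyond that reference.

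Your sketch, however, contains a concrete error in the construction of $V$. You claim one can take an open $V\subseteq U$ that simultaneously (i) contains $\{(x_1,x_2)\in U:x_1>0\text{ or }x_2>0\}$ --- this is the paper's definition of \emph{full} --- and (ii) has uniformly positive inner product with every unit vector in the gradient cone $C$. No such $V$ exists: the set in (i) is the arc of angles $(-\pi/2,\pi)$, while the set of unit vectors satisfying (ii) is the strictly smaller arc $(\arctan\gamma_0-\pi/2,\ \arctan\alpha_0+\pi/2)$. Concretely, $(-1,\delta)/\sqrt{1+\delta^2}$ lies in every full $V$ for each $\delta>0$, yet its inner product with $(1,\alpha_0)/\sqrt{1+\alpha_0^2}\in C$ equals $(\alpha_0\delta-1)/\sqrt{(1+\delta^2)(1+\alpha_0^2)}<0$ whenever $\alpha_0\delta<1$. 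Once (ii) fails, your lower bound $\theta_n(\vec p+\epsilon\vec f)-\theta_n(\vec p)\ge m\int_0^\epsilon\partial_1\theta_n(\vec p+s\vec f)\,ds$ with $m>0$ collapses: for $\vec f\in V$ near a level-curve direction of $\theta_n$ the directional derivative $\nabla\theta_n\cdot\vec f$ can vanish or change sign, and the subsequent ``reconciliation'' of the two line integrals --- which in any case you leave as a sketch --- has no starting point. The argument in \cite{BGK93} does not rely on a uniform positive lower bound for $\nabla\theta_n\cdot\vec f$ over a full $V$; you would need to revisit that paper to see how the comparison is actually organised for directions along which $\theta_n$ need not increase.
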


Now we are ready to prove Theorem \ref{thmKer}.
\begin{proof}[Proof of Theorem \ref{thmKer}]
We prove the theorem by contradiction. Suppose $H_K(\beta_c(d))>0$. Then for any $H_0\in[0,H_K(\beta_c(d)))$ we have $\theta(\beta_c(d),H_0)=0$. Proposition \ref{propcomp2} implies that
\begin{equation}
\theta(\beta_c(d)+\epsilon_1, \tilde{H}_0)=0 \text{ for some }\epsilon_1, \tilde{H}_0>0,
\end{equation}
which contradicts the fact that $\theta(\beta_c(d)+\epsilon_1,0)>0$. The proof of the second part of the theorem is a similar argument by contradiction, as follows. We assume that
\begin{equation}
H_K(\beta_P(d)+\epsilon)=0 \text{ for all }\epsilon>0.
\end{equation}
That is
\begin{equation}
\theta(\beta_P(d)+\epsilon,H)>0 \text{ for all }\epsilon>0, H>0.
\end{equation}
Then one gets a contradiction by Proposition \ref{propcomp2} and $H_K(\beta_P(d))=\infty$. Finally, we note that the assertion of \eqref{eqHKrange} that  $H_K(\beta_P(d)+\epsilon_d)<\infty$ follows from the argument given above after \eqref{eqbetaPdef}.
\end{proof}

We summarize our results about the Kert\'{e}sz line in the following theorem (which includes the results of Theorem \ref{thmKer}).
\begin{theorem}\label{thmKerIsing}
\begin{enumerate}[(i)]
\item
For $d= 2$ and $q=2$,  we have
\begin{equation}
 H_K(\beta) \text{ is }\begin{cases}
      \infty, & \beta\in[0,\beta_P(2)], \\
      \in(0,\infty), &\beta\in(\beta_P(2),\beta_c(2)),\\
      0, & \beta\geq \beta_c(2).
   \end{cases}
\end{equation}
\item
For $d\geq 1$ and $q\in \{2,3,\dots\}$,  there exists $\epsilon_d>0$ such that
\begin{equation}
 H_K(\beta)\text{ is }\begin{cases}
      \infty, & \beta\in[0,\beta_P(d)],\\
      \in(0,\infty), & \beta\in (\beta_P(d),\beta_P(d)+\epsilon_d],\\
      \in[0,\infty), & \beta \in (\beta_P(d)+\epsilon_d, \beta_c(d)),\\
      0, & \beta\geq \beta_c(d).
   \end{cases}
\end{equation}

\end{enumerate}
\end{theorem}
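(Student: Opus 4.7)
Almost every assertion in Theorem~\ref{thmKerIsing} is already available. The equalities $H_K(\beta)=\infty$ on $[0,\beta_P(d)]$ and $H_K(\beta)<\infty$ on $(\beta_P(d),\infty)$ come from the stochastic-domination discussion of \cite{CS00} recalled just after \eqref{eqbetaPdef}; the equality $H_K(\beta)=0$ for $\beta>\beta_c(d)$ is immediate from monotonicity of $\theta$ and the fact that $\theta(\beta,0)>0$ above $\beta_c(d)$; finally $H_K(\beta_c(d))=0$ and $H_K(\beta_P(d)+\epsilon_d)\in(0,\infty)$ are the two assertions of Theorem~\ref{thmKer}. Monotonicity of $H_K$ then delivers part (ii) in full, and also every clause of part (i) except the sharpening that, for $d=q=2$, $H_K(\beta)>0$ for \emph{every} $\beta\in(\beta_P(2),\beta_c(2))$, not merely for $\beta$ in a right-neighbourhood of $\beta_P(2)$.

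To establish this sharpening I would argue by contradiction, iterating Proposition~\ref{propcomp2}. Suppose $H_K(\beta_1)=0$ for some $\beta_1\in(\beta_P(2)+\epsilon_2,\beta_c(2))$, so that $\theta(\beta_1,H)>0$ for every $H>0$. Fix a small $H_0>0$, set $\vec p_0=(1-e^{-2\beta_1},1-e^{-2H_0})\in(0,1)^2$, and note $\theta(\vec p_0)>0$. Since $\vec f:=(1,0)\in V$ and $\theta(\vec p_0+\epsilon\vec f)\geq\theta(\vec p_0)>0$ by monotonicity, the right-hand inequality in Proposition~\ref{propcomp2} forces $\theta(\vec p_0+c_2\epsilon\vec e)>0$ for every $\vec e\in V$ and every sufficiently small $\epsilon>0$. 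Choosing $\vec e=(-1,1)/\sqrt 2\in V$ (its second coordinate is positive) produces a new point $\vec p_1\in(0,1)^2$ with strictly smaller first coordinate, strictly larger second coordinate, and $\theta(\vec p_1)>0$.

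Iterating yields a finite sequence $\vec p_0,\vec p_1,\dots,\vec p_N\in(0,1)^2$, aligned along the direction $(-1,1)/\sqrt 2$, with $\theta>0$ at every $\vec p_k$. Continuity of $\alpha$ and $\gamma$ in Proposition~\ref{propcomp1} supplies uniform bounds on the Proposition~\ref{propcomp2} constants $c_1,c_2,\epsilon_0$ on a suitably chosen compact subset of $(0,1)^2$, and hence a uniform positive lower bound on the step-size at every iteration. Because the displacement in the second coordinate equals the displacement in the first, starting from $\vec p_0$ with $p_0^2$ small guarantees $\vec p_N\in(0,1)^2$ throughout. After finitely many steps, the first coordinate of $\vec p_N$ falls below $1-e^{-2\beta_P(2)}=p_c^b(2)$, corresponding to an inverse temperature strictly below $\beta_P(2)$, while still $\theta(\vec p_N)>0$---which contradicts $H_K(\beta_P(2))=\infty$.

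The main obstacle is the uniformity bookkeeping that makes the iteration terminate: one has to keep the trajectory $\vec p_0,\vec p_1,\dots$ confined to a single compact subset $K\subset(0,1)^2$ on which $\alpha,\gamma$ (and hence the $c_1,c_2,\epsilon_0$ obtained by integrating them) are uniformly controlled, and in particular ensure that these constants do not degenerate as $\vec p_N$ approaches the vertical line $\{p^1=p_c^b(2)\}$ where the bond-percolation threshold lives. Once this uniform control is in place, the argument above closes and part~(i) follows.
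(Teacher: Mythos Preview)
Your reduction of part (ii) and of the non-critical clauses of part (i) to already-available results is correct and matches the paper. For the key clause of part (i)---that $H_K(\beta)>0$ for \emph{every} $\beta\in(\beta_P(2),\beta_c(2))$---the paper takes a completely different and much shorter route than your iteration argument: it simply invokes Higuchi's theorem \cite{Hig93}, which says that for the two-dimensional Ising model at any $\beta<\beta_c(2)$ there exists $H_{Hig}(\beta)>0$ such that there is no infinite $+$ spin cluster when $H\in[0,H_{Hig}(\beta))$. Via the Edwards--Sokal coupling, an infinite open random-cluster would force an infinite $+$ or $-$ spin cluster, so $\theta(\beta,H)=0$ for small $H$ and hence $H_K(\beta)>0$. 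That is the entire argument.

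Your iteration of Proposition~\ref{propcomp2} is noteworthy precisely because it uses nothing specific to $d=2$ or $q=2$: the inputs (Proposition~\ref{propcomp2}, monotonicity, $H_K(\beta_P(d))=\infty$) hold for all $d\geq1$ and integer $q\geq2$. If the uniformity you flag can be made rigorous---i.e.\ if the constants $c_2,\epsilon_0$ in Proposition~\ref{propcomp2} can be chosen locally uniformly in $\vec p$, which amounts to checking that the proof of Theorem~2 in \cite{BGK93} yields constants depending continuously on $\vec p$ through the continuous $\alpha,\gamma$ of Proposition~\ref{propcomp1}---then your argument would in fact establish the conjecture stated in the Remark following Theorem~\ref{thmKerIsing}, not merely the $d=q=2$ case. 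That the authors left this as a conjecture suggests either that they did not pursue the iteration or that they regarded the required uniform control as not immediately evident; in any case you have asserted but not verified it, and that verification is where the real work of your approach lies.
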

\begin{remark}
It is natural to conjecture that $H_K(\beta)>0$ if $\beta \in (\beta_P(d), \beta_c(d))$.
\end{remark}
\begin{proof}[Proof of Theorem \ref{thmKerIsing}]
By Theorem 1 in \cite{Hig93}, for each $\beta\in [0,\beta_c(2)) $ there exists $H_{Hig}(\beta)>0$ such that there is no infinite $+$ cluster for the Ising model on $\mathbb{Z}^2$ at inverse temperature $\beta$ with external field $H\in[0,H_{Hig}(\beta))$. By the Edwards-Sokal coupling, the existence of an infinite open cluster in the random cluster model implies the existence of either an infinite $+$ or $-$ cluster in the corresponding Ising model. Therefore, $\theta(\beta,H)=0$ for any $\beta\in [0,\beta_c(2)) $ and $H\in[0,H_{Hig}(\beta))$. This, Theorem \ref{thmKer} and the argument near \eqref{eqbetaPdef} complete the proof of the first part of the theorem. The second part of the theorem follows from Theorem \ref{thmKer} and the argument near \eqref{eqbetaPdef}.
\end{proof}

Our next result is the exponential decay of $\mathbb{P}_{\Lambda,w,\beta, H}$ for $(\beta,H)$ under or on the left of the Kert\'{e}sz line. More precisely,
\begin{theorem}\label{thmPCdecay}
Suppose $d\geq 1$ and $q>1$. For any $\beta \in [0, \beta_c(d))$ and $H\in [0,H_K(\beta))$, there exists $C_{8}(\beta,H)\in(0,\infty)$ such that for each $L\geq  0$,
\begin{equation}
\mathbb{P}_{\Lambda_L, w,\beta,H}(0\overset{\mathbb{Z}^d}{\longleftrightarrow} \partial_{in}\Lambda_L)\leq e^{-C_{8}(\beta,H)L}.
\end{equation}
\end{theorem}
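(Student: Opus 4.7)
The plan is to derive the theorem from a sharpness-of-phase-transition argument for the wired random-cluster measure with external field, with the strict inequality $H < H_K(\beta)$ providing the slack that drives the argument.

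First I would place $(\beta, H)$ strictly inside the subcritical region $\{\theta = 0\}$. By the definition of $H_K(\beta)$, the hypothesis $H < H_K(\beta)$ yields some $H' \in (H, H_K(\beta))$ with $\theta(\beta, H') = 0$; in the notation of Proposition \ref{propcomp2}, this says $\theta$ vanishes at $\vec{p} + \epsilon \vec{f}$ for a suitable $\vec{f}$ in the full set $V$. Applying Proposition \ref{propcomp2} in an ``upper-right'' direction, I would deduce that $\theta(\beta'', H'') = 0$ also for some pair with $\beta'' > \beta$ and $H'' > H$. Hence $(\beta, H)$ has strict room in both coordinates before leaving the region of non-percolation.

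Next I would invoke sharpness of the phase transition for the finite-volume quantity $\theta_L(\vec{p}) = \mathbb{P}_{\Lambda_L, w, \beta, H}(0 \overset{\mathbb{Z}^d}{\longleftrightarrow} \partial_{in}\Lambda_L)$: at any point strictly inside $\{\theta = 0\}$, the sequence $\theta_L(\vec{p})$ decays exponentially in $L$. In the no-field case, this is the sharpness of the phase transition for random-cluster models proved by Bollob\'as, Grimmett and Kesten \cite{BGK93}. The extension to the two-parameter setting with external field is handled by the same Menshikov-type differential-inequality scheme: Proposition \ref{propcomp1} supplies the derivative comparison between $\partial \theta_n / \partial p_1$ and $\partial \theta_n / \partial p_2$, which lets the classical one-parameter inequality be used to control temperature and field simultaneously, while Proposition \ref{propcomp2} provides the full-direction monotonicity needed to promote strict subcriticality at a nearby point to a workable differential inequality at $(\beta, H)$.

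The main obstacle I expect is carefully transferring the BGK93 machinery to the presence of external edges to the ghost vertex, especially the verification that the Russo-type formula and the ``sprinkling'' step in Menshikov's argument still work when one of the two parameters controls edges to $g$ rather than internal edges. Fortunately, Propositions \ref{propcomp1}-\ref{propcomp2} encapsulate exactly these ingredients, so feeding the strict subcriticality from the first step into the differential inequality produced by the second step yields the claimed exponential bound with a positive rate $C_8(\beta, H)$, which will depend continuously on $(\beta, H)$ away from the Kert\'esz line and degenerate only as $(\beta, H)$ approaches it.
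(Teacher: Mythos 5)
There is a genuine gap at the heart of your argument: the step where you ``invoke sharpness of the phase transition'' for $\theta_L$ is precisely the content of Theorem \ref{thmPCdecay}, and the source you cite for it does not prove it. The reference \cite{BGK93} is Bezuidenhout, Grimmett and Kesten (not Bollob\'as--Grimmett--Kesten), and that paper establishes derivative comparisons of the type in Propositions \ref{propcomp1}--\ref{propcomp2} and strict inequalities between critical values; it does not establish exponential decay of the finite-volume connection probability throughout the subcritical regime. Propositions \ref{propcomp1} and \ref{propcomp2} only let you trade an increment in one parameter for an increment in the other (which is why the paper uses them to locate the Kert\'esz line), so at best they reduce the two-parameter problem to a one-parameter sharpness statement for the random-cluster measure with a ghost vertex. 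That one-parameter statement is itself the hard part: the classical Menshikov/Aizenman--Barsky differential-inequality scheme you allude to relies on the BK inequality, which fails for random-cluster measures with $q>1$, and sharpness for these dependent models was only obtained via the OSSS inequality for monotonic measures in \cite{DCRT17}. Your first step (pushing $(\beta,H)$ strictly inside $\{\theta=0\}$ via Proposition \ref{propcomp2}) is harmless but does not substitute for this missing ingredient, and is in fact not needed.

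The paper's actual proof is a one-line appeal to exactly that machinery: it observes that $\mathbb{P}_{\Lambda,w,\beta,H}$ has the FKG lattice property and the domain Markov property, so the decision-tree/OSSS proof of Theorem 1.2 of \cite{DCRT17} applies to $\mathbb{P}_{\Lambda_L,w,\beta,H}$ and yields exponential decay for all $H<H_K(\beta)$. To repair your outline, replace the appeal to \cite{BGK93} by an appeal to \cite{DCRT17} (or another genuine sharpness result valid for dependent monotonic measures); the comparison propositions are neither necessary nor sufficient for this theorem.
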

\begin{proof}
Note that $\mathbb{P}_{\Lambda,w,\beta, H}$ has the FKG lattice property and the domain Markov property. So the proof of Theorem 1.2 in \cite{DCRT17} applies to  $\mathbb{P}_{\Lambda_L,w,\beta, H}$ (and $\mathbb{P}_{\mathbb{Z}^d,w,\beta,H}$).
\end{proof}

\section{Weak mixing property}\label{secMixing}
In this section, we prove Theorems \ref{thmSBSDB}-\ref{thmABG}.
\subsection{Generalized random cluster model}
We first generalize the random cluster model with constant external field in \eqref{eqRCdef} to a more general external field. Let $\Lambda\subseteq \mathbb{Z}^d$ be finite and $\vec{h}\in \mathbb{R}^{\Lambda}$. Consider the extended graph $(V,E)$ where $V=\Lambda\cup\{g^{+},g^{-}\}$, where $g^{+}$ and $g^{-}$ represent the $+$ and $-$ ghosts respectively and $E=\mathbb{E}^d\cup\mathcal{E}^{\vec{h}}_{+}(\Lambda)\cup\mathcal{E}^{\vec{h}}_{-}(\Lambda)$ with
\begin{equation}
\mathcal{E}^{\vec{h}}_{+}(\Lambda):=\{\{z,g^{+}\}:z\in\Lambda \text{ and }h_z>0\},
\end{equation}
\begin{equation}
\mathcal{E}^{\vec{h}}_{-}(\Lambda):=\{\{z,g^{-}\}:z\in\Lambda \text{ and }h_z<0\}.
\end{equation}

Let $\mathscr{E}_{+}(\Lambda^C)\cup\mathscr{E}_{-}(\Lambda^C)$ be the set of external edges with one endpoint in $\Lambda^C$ and the other endpoint in $\{g^{+},g^{-}\}$ (whether it is $g^{+}$ or $g^{-}$ will be clear from the context). The random cluster model at $\beta$ on $\Lambda\subseteq \mathbb{Z}^d$ with boundary condition $\rho\in\{0,1\}^{\mathscr{B}(\Lambda^C)\cup\mathscr{E}_{+}(\Lambda^C)\cup\mathscr{E}_{-}(\Lambda^C)}$ and with external field $H\vec{h}\in\mathbb{R}^{\Lambda}$ is defined by the probability measure $\mathbb{P}_{\Lambda,\rho,\beta,H}^{\vec{h}}$ on $\{0,1\}^{\overline{\mathscr{B}}(\Lambda)\cup\mathcal{E}^{\vec{h}}_{+}(\Lambda)\cup\mathcal{E}^{\vec{h}}_{-}(\Lambda)}$ such that for any $\omega\in\{0,1\}^{\overline{\mathscr{B}}(\Lambda)\cup\mathcal{E}^{\vec{h}}_{+}(\Lambda)\cup\mathcal{E}^{\vec{h}}_{-}(\Lambda)}$,
\begin{align}\label{eqRCGdef}
\mathbb{P}_{\Lambda,\rho,\beta,H}^{\vec{h}}(\omega)=&\frac{q^{\mathcal{K}\left(\Lambda, (\omega\rho)_{\Lambda}\right)}}{\hat{Z}_{\Lambda,\rho,\beta,H}^{\vec{h}}}1_{\{g^{+}\centernot\longleftrightarrow g^{-}\}}(\omega)\prod_{e\in\overline{\mathscr{B}}(\Lambda)}(1-e^{-2\beta})^{\omega_e}(e^{-2\beta})^{1-\omega_e}\nonumber\\
&\times\prod_{e\in\mathcal{E}^{\vec{h}}_{+}(\Lambda)\cup\mathcal{E}^{\vec{h}}_{-}(\Lambda)}(1-e^{-2H|h_e|})^{\omega_e}(e^{-2H|h_e|})^{1-\omega_e},
\end{align}
where $\mathcal{K}\left(\Lambda, (\omega\rho)_{\Lambda}\right)$ denotes the number of clusters in $(\omega\rho)_{\Lambda}$ which intersect $\Lambda$ and contain neither $g^{+}$ nor $g^{-}$, $h_e:=h_z$ for each external edge $e=\{z,g^{+}\}$ or $\{z,g^{-}\}$, $1_{\{\cdot\}}$ denotes the indicator function, and other notation is similar to that in \eqref{eqRCdef}. We now define $\mathbb{P}_{\Lambda,\rho,\beta,H}^{|\vec{h}|}$ on the same graph where $\mathbb{P}_{\Lambda,\rho,\beta,H}^{\vec{h}}$ lives using the right hand side of  \eqref{eqRCGdef} except with the indicator function removed. We note that $\mathbb{P}_{\Lambda,\rho,\beta,H}^{|\vec{h}|}$ is not quite the same as replacing each $h_z$ by $|h_z|$ in $\mathbb{P}_{\Lambda,\rho,\beta,H}^{\vec{h}}$ but these two measures have the same marginal on $\overline{\mathscr{B}}(\Lambda)$ because our definition of $\mathcal{K}(\Lambda,\cdot)$ implies that $g^+$ and $g^-$ are wired in $\mathbb{P}_{\Lambda,\rho,\beta,H}^{|\vec{h}|}$. An easy observation is that $\mathbb{P}_{\Lambda,\rho,\beta,H}^{\vec{h}}$ is stochastically dominated by $\mathbb{P}_{\Lambda,\rho,\beta,H}^{|\vec{h}|}$.
\begin{lemma}\label{lemsd}
For any increasing event $A\subseteq \{0,1\}^{\overline{\mathscr{B}}(\Lambda)\cup\mathcal{E}^{\vec{h}}_{+}(\Lambda)\cup\mathcal{E}^{\vec{h}}_{-}(\Lambda)}$, we have
\begin{equation}
\mathbb{P}_{\Lambda,\rho,\beta,H}^{\vec{h}}(A)\leq\mathbb{P}_{\Lambda,\rho,\beta,H}^{|\vec{h}|}(A).
\end{equation}
\end{lemma}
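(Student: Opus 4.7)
The plan is to view $\mathbb{P}_{\Lambda,\rho,\beta,H}^{\vec{h}}$ as a conditioned version of $\mathbb{P}_{\Lambda,\rho,\beta,H}^{|\vec{h}|}$ and then apply the FKG inequality. Comparing the two definitions, the only difference is the factor $1_{\{g^{+}\centernot\longleftrightarrow g^{-}\}}(\omega)$ in \eqref{eqRCGdef}. Hence, letting $D:=\{g^{+}\centernot\longleftrightarrow g^{-}\}$, we have the identity
\begin{equation}
\mathbb{P}_{\Lambda,\rho,\beta,H}^{\vec{h}}(\cdot)=\mathbb{P}_{\Lambda,\rho,\beta,H}^{|\vec{h}|}(\cdot \mid D).
\end{equation}
So what I need to show is that conditioning on $D$ (a decreasing event in $\omega$, since adding open edges can only create more connections between $g^{+}$ and $g^{-}$) can only decrease the probability of any increasing event.

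The key input is the FKG lattice property for $\mathbb{P}_{\Lambda,\rho,\beta,H}^{|\vec{h}|}$. As noted in the paragraph preceding the lemma, the convention in the definition of $\mathcal{K}(\Lambda,\cdot)$ is equivalent to wiring $g^{+}$ and $g^{-}$ together into a single ghost vertex. Thus $\mathbb{P}_{\Lambda,\rho,\beta,H}^{|\vec{h}|}$ is simply an instance of the standard random cluster measure (with $q=2\geq 1$) on the extended graph obtained by merging $g^{+}$ and $g^{-}$, with edge-weights $p_{e}=1-e^{-2\beta}$ on internal edges and $p_{e}=1-e^{-2H|h_{e}|}$ on external edges. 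The usual verification of the FKG lattice inequality for random cluster measures (see, e.g., Section 3 of \cite{Gri06}) therefore applies and gives that $\mathbb{P}_{\Lambda,\rho,\beta,H}^{|\vec{h}|}$ satisfies the FKG lattice property whenever $q\geq 1$.

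Once the FKG lattice property is in hand, the conclusion is immediate: for any increasing event $A$ and any decreasing event $D$ with $\mathbb{P}_{\Lambda,\rho,\beta,H}^{|\vec{h}|}(D)>0$, FKG gives
\begin{equation}
\mathbb{P}_{\Lambda,\rho,\beta,H}^{|\vec{h}|}(A\cap D)\leq\mathbb{P}_{\Lambda,\rho,\beta,H}^{|\vec{h}|}(A)\,\mathbb{P}_{\Lambda,\rho,\beta,H}^{|\vec{h}|}(D),
\end{equation}
so that $\mathbb{P}_{\Lambda,\rho,\beta,H}^{|\vec{h}|}(A\mid D)\leq\mathbb{P}_{\Lambda,\rho,\beta,H}^{|\vec{h}|}(A)$, which is exactly the desired inequality. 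The only step that requires any care is confirming that the merging of the two ghosts does not disturb the FKG property; this is straightforward because after the merge the weights factor as a product of single-edge terms times $q^{(\text{number of clusters})}$, which is the standard random cluster form. I expect no serious obstacle; the whole argument is essentially a one-line FKG application once the identification of $\mathbb{P}^{\vec{h}}$ with $\mathbb{P}^{|\vec{h}|}(\cdot\mid D)$ is observed.
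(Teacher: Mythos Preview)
Your proof is correct and follows essentially the same approach as the paper: both observe that $\mathbb{P}_{\Lambda,\rho,\beta,H}^{\vec{h}}$ is obtained from $\mathbb{P}_{\Lambda,\rho,\beta,H}^{|\vec{h}|}$ by tilting with the decreasing indicator $1_{\{g^{+}\centernot\longleftrightarrow g^{-}\}}$ (the paper phrases this via the Radon--Nikodym derivative, you via conditioning), and then invoke the FKG inequality for $\mathbb{P}_{\Lambda,\rho,\beta,H}^{|\vec{h}|}$. Your additional remark that merging $g^{+}$ and $g^{-}$ puts the measure in standard random-cluster form is a helpful justification of the FKG step that the paper leaves implicit.
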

\begin{proof}
Note that the Radon-Nikodym derivative
\begin{equation}
\frac{d\mathbb{P}_{\Lambda,\rho,\beta,H}^{\vec{h}}}{d\mathbb{P}_{\Lambda,\rho,\beta,H}^{|\vec{h}|}}(\omega)=C(\Lambda,\rho,\beta,H,\vec{h})1_{\{g^{+}\centernot\longleftrightarrow g^{-}\}}(\omega)
\end{equation}
is a decreasing function (in the FKG sense) where $C(\Lambda,\rho,\beta,H,\vec{h})$ is a constant that only depends on $\Lambda,\rho,\beta,H, \vec{h}$. Since $\mathbb{P}_{\Lambda,\rho,\beta,H}^{|\vec{h}|}$ satisfies the FKG inequality, the conclusion of the lemma follows.
\end{proof}

Next, we bound the total variation of $\mathbb{P}^{\vec{h}}_{\Lambda,\rho,\beta,H}(\omega_{\Delta}\in \cdot)$ and $\mathbb{P}^{\vec{h}}_{\Lambda,\rho^{\prime},\beta,H}(\omega_{\Delta}\in \cdot)$ by a connectivity probability.
\begin{lemma}\label{lemcoupleRC}
Let $\Delta\subseteq \Lambda$ be finite subsets of $\mathbb{Z}^d$. Then
\begin{align}
&\quad\sup_{\rho,\rho^{\prime}\in\{0,1\}^{\mathscr{B}(\Lambda^C)\cup\mathscr{E}_{+}(\Lambda^C)\cup\mathscr{E}_{-}(\Lambda^C)}}TV\left(\mathbb{P}^{\vec{h}}_{\Lambda,\rho,\beta,H}(\omega_{\Delta}\in \cdot), \mathbb{P}^{\vec{h}}_{\Lambda,\rho^{\prime},\beta,H}(\omega_{\Delta}\in \cdot)\right)\nonumber\\
&\leq\mathbb{P}_{\Lambda,w,\beta,H}^{|\vec{h}|}(\partial_{in}\Delta \overset{\mathbb{Z}^d}{\longleftrightarrow} \partial_{ex}\Lambda),
\end{align}
where $w$ in the subscript denotes the wired boundary condition.
\end{lemma}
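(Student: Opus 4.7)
My plan is the standard disagreement-percolation coupling argument for random cluster models (in the spirit of \cite{Ale98}), adapted to the two-ghost setting of \eqref{eqRCGdef} by invoking Lemma~\ref{lemsd} at the final step to pass from $\mathbb{P}^{\vec{h}}$ to $\mathbb{P}^{|\vec{h}|}$.

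First I would record the domain Markov property of $\mathbb{P}^{\vec{h}}_{\Lambda,\rho,\beta,H}$ directly from \eqref{eqRCGdef}: conditional on $\omega|_{F^c}$ for any edge subset $F\subseteq\overline{\mathscr{B}}(\Lambda)\cup\mathcal{E}^{\vec{h}}_{+}(\Lambda)\cup\mathcal{E}^{\vec{h}}_{-}(\Lambda)$, the conditional law of $\omega|_F$ is again of the form \eqref{eqRCGdef} on $F$, with the cluster count and the $\{g^{+}\centernot\longleftrightarrow g^{-}\}$ indicator inherited from the frozen data $(\omega|_{F^c},\rho)$. I would then construct a joint coupling of two samples $(\omega,\omega')$ drawn under $\rho$ and $\rho'$ respectively by exploring from $\partial_{ex}\Lambda$ inward: sequentially reveal, using the maximal coupling of the single-edge conditional laws, the status of each edge in $(\overline{\mathscr{B}}(\Lambda)\cup\mathcal{E}^{\vec{h}}_{\pm}(\Lambda))\setminus(\mathscr{B}(\Delta)\cup\mathcal{E}^{\vec{h}}_{\pm}(\Delta))$ that touches the growing halo of $\partial_{ex}\Lambda$ in either configuration. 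Let $B:=\{\partial_{in}\Delta\overset{\mathbb{Z}^d}{\longleftrightarrow}\partial_{ex}\Lambda\}$ denote the bad connection event.

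On $B^c$, the exploration terminates before touching any edge in $\mathscr{B}(\Delta)\cup\mathcal{E}^{\vec{h}}_{\pm}(\Delta)$; moreover, the dependence of \eqref{eqRCGdef} on $\rho$ enters only through clusters that touch $\partial_{ex}\Lambda$ and through the indicator $1_{\{g^{+}\centernot\longleftrightarrow g^{-}\}}$, both of which on $B^c$ factor into a function of the explored exterior configuration (identical for $\omega$ and $\omega'$ after the coupling) times a function of $\omega|_{\Delta}$ whose form is the same under $\rho$ and $\rho'$. Thus the conditional laws of $\omega|_{\mathscr{B}(\Delta)\cup\mathcal{E}^{\vec{h}}_{\pm}(\Delta)}$ and $\omega'|_{\mathscr{B}(\Delta)\cup\mathcal{E}^{\vec{h}}_{\pm}(\Delta)}$ given the exploration coincide on $B^c$, so the maximal coupling can be extended to the identity there. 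By the coupling inequality the desired TV is at most $\mathbb{P}(B)$ in any such coupling; FKG monotonicity lets us compare to the wired boundary condition, and then Lemma~\ref{lemsd} (using that $B$ is increasing in $\omega$) gives
$$TV\bigl(\mathbb{P}^{\vec{h}}_{\Lambda,\rho,\beta,H}(\omega_{\Delta}\in\cdot),\mathbb{P}^{\vec{h}}_{\Lambda,\rho',\beta,H}(\omega_{\Delta}\in\cdot)\bigr)\leq \mathbb{P}^{\vec{h}}_{\Lambda,w,\beta,H}(B)\leq \mathbb{P}^{|\vec{h}|}_{\Lambda,w,\beta,H}(B).$$

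The step I expect to require the most care is the factorization asserted in the previous paragraph: one must verify that on $B^c$ no $g^{+}$-to-$g^{-}$ path can simultaneously use interior edges from $\omega|_{\Delta}$ and exterior edges whose status depends on the choice between $\rho$ and $\rho'$, which holds precisely because such a path would have to exit $\Delta$ through $\partial_{in}\Delta$, cross $\Lambda\setminus\Delta$, and reach $\partial_{ex}\Lambda$, contradicting $B^c$. Once this splitting is made rigorous, the indicator and cluster-count contributions cleanly separate into exterior and interior factors whose $\rho$-dependence is confined to the exterior, closing the argument.
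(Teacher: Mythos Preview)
Your overall plan is the right one, but there is a genuine gap in how you control the bad event, and it stems from a structural feature of $\mathbb{P}^{\vec{h}}_{\Lambda,\rho,\beta,H}$ that your write-up glosses over. The indicator $1_{\{g^{+}\centernot\longleftrightarrow g^{-}\}}$ in \eqref{eqRCGdef} destroys the FKG lattice condition: if $\omega,\omega'\in\{g^{+}\centernot\longleftrightarrow g^{-}\}$ it can easily happen that $\omega\vee\omega'\notin\{g^{+}\centernot\longleftrightarrow g^{-}\}$. Hence your sentence ``FKG monotonicity lets us compare to the wired boundary condition'' is not available for $\mathbb{P}^{\vec{h}}$, and there is no monotone two-way coupling of $\mathbb{P}^{\vec{h}}_{\Lambda,\rho}$ and $\mathbb{P}^{\vec{h}}_{\Lambda,\rho'}$ to fall back on. With only the maximal single-edge coupling you describe, the halo you grow is the boundary cluster of the \emph{union} $\omega\vee\omega'$; your event $B$ is then an event on $(\omega,\omega')$ jointly, not on a single sample, and a path in $\omega\vee\omega'$ from $\partial_{ex}\Lambda$ to $\partial_{in}\Delta$ need not lie in either $\omega$ or $\omega'$ alone. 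So neither a union bound nor Lemma~\ref{lemsd} (which compares a single $\mathbb{P}^{\vec{h}}$ to $\mathbb{P}^{|\vec{h}|}$) gets you to $\mathbb{P}^{|\vec{h}|}_{\Lambda,w}(B)$. Relatedly, your claim that on $B^{c}$ the explored exterior configurations are ``identical for $\omega$ and $\omega'$'' is false under maximal coupling once $\rho\neq\rho'$: disagreements appear at the very first explored edge.

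The paper closes this gap by coupling \emph{three} configurations rather than two: it adjoins a dominating sample $\omega^{w}\sim\mathbb{P}^{|\vec{h}|}_{\Lambda,w,\beta,H}$ (which \emph{does} satisfy the FKG lattice condition) and lets the exploration be driven solely by $\omega^{w}$. Using Lemma~\ref{lemsd} conditionally at each step together with FKG for $\mathbb{P}^{|\vec{h}|}$, one maintains $\omega^{\rho}_{G_t}\le\omega^{w}_{G_t}$ and $\omega^{\rho'}_{G_t}\le\omega^{w}_{G_t}$ throughout. At the stopping time the internal edges bounding the unexplored region are closed in $\omega^{w}$ and hence in both $\omega^{\rho}$ and $\omega^{\rho'}$, which is what forces the two conditional laws on the remaining edges to coincide (without any need for the explored parts of $\omega^{\rho},\omega^{\rho'}$ to agree). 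The bad event is then simply $\{\partial_{in}\Delta\overset{\mathbb{Z}^d}{\longleftrightarrow}\partial_{ex}\Lambda\text{ in }\omega^{w}\}$, whose probability is $\mathbb{P}^{|\vec{h}|}_{\Lambda,w,\beta,H}(\partial_{in}\Delta\overset{\mathbb{Z}^d}{\longleftrightarrow}\partial_{ex}\Lambda)$ on the nose. In short, the missing idea is to bring in the FKG-friendly measure $\mathbb{P}^{|\vec{h}|}$ \emph{inside} the coupling, not just at the end.
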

\begin{proof}
The proof uses couplings similar to those in the proofs of Theorem 3.11 of \cite{New97} and Lemma 2.3 of \cite{Ale98} and in the discussion on p. 827 of \cite{LS12}. For completeness, we spell out the details here. Note that $\mathbb{P}^{|\vec{h}|}_{\Lambda,w,\beta,H}$ stochastically dominates  $\mathbb{P}_{\Lambda,\rho,\beta,H}^{|\vec{h}|}$ and $\mathbb{P}^{|\vec{h}|}_{\Lambda,\rho^{\prime},\beta,H}$, and thus also $\mathbb{P}^{\vec{h}}_{\Lambda,\rho,\beta,H}$ and $\mathbb{P}^{\vec{h}}_{\Lambda,\rho^{\prime},\beta,H}$ by Lemma \ref{lemsd}. Our goal is to find a coupling of $\mathbb{P}^{\vec{h}}_{\Lambda,\rho,\beta,H}$, $\mathbb{P}^{\vec{h}}_{\Lambda,\rho^{\prime},\beta,H}$ and $\mathbb{P}^{|\vec{h}|}_{\Lambda,w,\beta,H}$, such that the three configurations from those three probability measures coincide inside the boundary cluster of $\mathbb{P}^{|\vec{h}|}_{\Lambda,w,\beta,H}$.

We proceed as follows. Order the set of edges in $\overline{\mathscr{B}}(\Lambda)\cup\mathscr{E}^{\vec{h}}_{+}(\Lambda)\cup\mathscr{E}^{\vec{h}}_{-}(\Lambda)$ in such a way that $e_1$ precedes $e_2$ in the ordering if $d(\tilde{e}_1,\Lambda^C)<d(\tilde{e}_2,\Lambda^C)$ where $\tilde{e}_1$ is the set $\{x,y\}$ if $e_1=\{x,y\}$ with $x,y\in\mathbb{Z}^d$ and $\tilde{e}_1:=\{z\}$ if $e_1=\{z,g^{+}\}$ or $\{z,g^{-}\}$ with $z\in\mathbb{Z}^d$. We progressively explore in unit time steps the boundary open cluster of the configuration $\omega^w$ in the support of $\mathbb{P}_{\Lambda,w,\beta,H}^{|\vec{h}|}$. Denote by $G_t$ the set of edges revealed up to (and including) the integer time $t$, and let $E_t$ be the set of open edges in $G_t\cap\overline{\mathscr{B}}(\Lambda)$, where we use the following process of revealing edges.
\begin{itemize}
\item $G_0=\emptyset$ and $E_0=\emptyset$.

\item For each $t\geq 0$, reveal the smallest (according to the above ordering) unexplored edge $e$ that is adjacent to $E_t\cup\mathscr{B}(\Lambda^C)$, setting its value in $\omega^w$ via an independent $[0,1]$ uniformly distributed random variable $U_e$:
\begin{equation}
\omega^w_e:=1\{U_e\leq \mathbb{P}_{\Lambda,w,\beta,H}^{|\vec{h}|}(\omega_e=1|\omega_{G_t}=\omega^w_{G_t})\}.
\end{equation}
We also explore the configurations in the other two measures as follows:
\begin{eqnarray}
&\omega^{\rho}_e:=1\{U_e\leq \mathbb{P}_{\Lambda,\rho,\beta,H}^{\vec{h}}(\omega_e=1|\omega_{G_t}=\omega^{\rho}_{G_t})\},\\
&\omega^{\rho^{\prime}}_e:=1\{U_e\leq \mathbb{P}_{\Lambda,\rho^{\prime},\beta,H}^{\vec{h}}(\omega_e=1|\omega_{G_t}=\omega^{\rho^{\prime}}_{G_t})\}.
\end{eqnarray}
Let $G_{t+1}:=G_t\cup\{e\}$ and let
\begin{equation}
E_{t+1}=\begin{cases}
      E_t\cup\{e\}, & e\in\overline{\mathscr{B}}(\Lambda) \text{ and } \omega^w_e=1,\\
      E_t, & \text{otherwise.}
   \end{cases}
\end{equation}

\item Let $\tau$ be the first time $t$ at which there is no unexplored edge $e\in\overline{\mathscr{B}}(\Lambda)$ that is adjacent to $E_t\cup\mathscr{B}(\Lambda^C)$.
\end{itemize}
One may show by induction that $\omega^{\rho}_{G_t}\leq \omega^w_{G_t}$ and $\omega^{\rho^{\prime}}_{G_t}\leq \omega^w_{G_t}$ for each $t\in[0,\tau]$ (we only show $\omega^{\rho}_{G_t}\leq \omega^w_{G_t}$  since the other proof is the same):
Suppose $\omega^{\rho}_{G_t}\leq \omega^w_{G_t}$  and let $e$ be the edge that will be explored at time $t+1$; then by Lemma \ref{lemsd},
\begin{equation}
\mathbb{P}_{\Lambda,\rho,\beta,H}^{\vec{h}}(\omega_e=1|\omega_{G_t}=\omega^{\rho}_{G_t})\leq  \mathbb{P}_{\Lambda,\rho,\beta,H}^{|\vec{h}|}(\omega_e=1|\omega_{G_t}=\omega^{\rho}_{G_t}).
\end{equation}
By the induction assumption and the FKG lattice property for $ \mathbb{P}_{\Lambda,\cdot,\beta,H}^{|\vec{h}|}$,
\begin{align}
\mathbb{P}_{\Lambda,\rho,\beta,H}^{|\vec{h}|}(\omega_e=1|\omega_{G_t}=\omega^{\rho}_{G_t}))&\leq  \mathbb{P}_{\Lambda,w,\beta,H}^{|\vec{h}|}(\omega_e=1|\omega_{G_t}=\omega^{\rho}_{G_t})\nonumber\\
&\leq  \mathbb{P}_{\Lambda,w,\beta,H}^{|\vec{h}|}(\omega_e=1|\omega_{G_t}=\omega^{w}_{G_t}).
\end{align}

After time $\tau$, we may reveal the remaining edges according to their ordering while following the same process described above. It is easy to see that $\omega^{\rho}_e=\omega^{\rho^{\prime}}_e$ for those remaining edges since the closed edges in $\overline{\mathscr{B}}(\Lambda)$ adjacent to $E_{\tau}$ serve as the common boundary conditions for $\mathbb{P}^{\vec{h}}_{\Lambda,\rho,\beta,H}$ and $\mathbb{P}^{\vec{h}}_{\Lambda,\rho^{\prime},\beta,H}$. Therefore, under this coupling
\begin{equation}
\omega_{\Delta}^{\rho}\neq\omega_{\Delta}^{\rho^{\prime}} \text{ if and only if }\partial_{in}\Delta \overset{\mathbb{Z}^d}{\longleftrightarrow} \partial_{ex}\Lambda \text{ in } \omega^w,
\end{equation}
which completes the proof of the lemma.
\end{proof}

\subsection{Edwards-Sokal coupling for the RFIM}\label{subsecES}
The Edwards-Sokal coupling \cite{ES88} is a probability measure on a common probability space for the random cluster and Ising models. We will restrict attention to the case where there is a boundary condition $\eta\in\{-1,+1\}^{\partial_{ex} \Lambda}$, only for the Ising variables. More precisely, let $\hat{\mathbb{P}}^{\vec{h}}_{\Lambda,\eta,\beta,H}$ be the following probability measure on $\{-1,+1\}^{\Lambda}\times\{0,1\}^{\overline{\mathscr{B}}(\Lambda)\cup\mathcal{E}^{\vec{h}}_{+}(\Lambda)\cup\mathcal{E}^{\vec{h}}_{-}(\Lambda)}$. For each $(\sigma,\omega)\in\{-1,+1\}^{\Lambda}\times\{0,1\}^{\overline{\mathscr{B}}(\Lambda)\cup\mathcal{E}^{\vec{h}}_{+}(\Lambda)\cup\mathcal{E}^{\vec{h}}_{-}(\Lambda)}$,
\begin{align}\label{eqCoupledef}
\hat{\mathbb{P}}_{\Lambda,\eta,\beta,H}^{\vec{h}}(\sigma,\omega)\propto&~1_{F(\Lambda,\eta)}(\sigma,\omega)\prod_{e\in\overline{\mathscr{B}}(\Lambda)}(1-e^{-2\beta})^{\omega_e}(e^{-2\beta})^{1-\omega_e}\nonumber\\
&\times\prod_{e\in\mathcal{E}^{\vec{h}}_{+}(\Lambda)\cup\mathcal{E}^{\vec{h}}_{-}(\Lambda)}(1-e^{-2H|h_e|})^{\omega_e}(e^{-2H|h_e|})^{1-\omega_e}.
\end{align}
Here, $F(\Lambda,\eta)$ is the event
\[\{(\sigma,\omega)\in\{-1,+1\}^{\Lambda}\times\{0,1\}^{\overline{\mathscr{B}}(\Lambda)\cup\mathcal{E}^{\vec{h}}_{+}(\Lambda)\cup\mathcal{E}^{\vec{h}}_{-}(\Lambda)}: (\sigma\eta)_{\Lambda}(x)=(\sigma\eta)_{\Lambda}(y)  \text{ for each }\omega_{\{x,y\}}=1\}\]
(where $(\sigma\eta)_{\Lambda}$ is the configuration which coincides with $\sigma$ on $\Lambda$ and with $\eta$ on $\partial_{ex}\Lambda$, and we assign $+1$ to $g^{+}$ and $-1$ to $g^{-}$); the constant of proportionality is chosen so that
\begin{equation}
\sum_{(\sigma,\omega)}\hat{\mathbb{P}}_{\Lambda,\rho,\beta,H}^{\vec{h}}(\sigma,\omega)=1.
\end{equation}

The marginal of $\hat{\mathbb{P}}_{\Lambda,\eta,\beta,H}^{\vec{h}}$ on $\{-1,+1\}^{\Lambda}$ is $P_{\Lambda,\eta,\beta,H}^{\vec{h}}$. The marginal on $\{0,1\}^{\overline{\mathscr{B}}(\Lambda)\cup\mathcal{E}^{\vec{h}}_{+}(\Lambda)\cup\mathcal{E}^{\vec{h}}_{-}(\Lambda)}$ is $\mathbb{P}_{\Lambda,\eta,\beta,H}^{\vec{h}}(\cdot):=\mathbb{P}_{\Lambda,w,\beta,H}^{|\vec{h}|}(\cdot | E(\Lambda,\eta))$, where one may interpret $w$ as putting an open external edge between each vertex in $\{x\in\partial_{ex}\Lambda:\eta_x=+1\}$ (respectively $\{x\in\partial_{ex}\Lambda:\eta_x=-1\}$) and $g^{+}$ (respectively $g^{-}$), and all other edges in $\mathscr{B}(\Lambda^C)$ are closed; $E(\Lambda,\eta)$ is the event
\begin{equation*}
\{\omega\in\{0,1\}^{\overline{\mathscr{B}}(\Lambda)\cup\mathcal{E}^{\vec{h}}_{+}(\Lambda)\cup\mathcal{E}^{\vec{h}}_{-}(\Lambda)}: x\overset{\bar{\Lambda}\cup\{g^{+},g^{-}\}}{\centernot\longleftrightarrow} y \text{ for any }x,y \in \partial_{ex}\Lambda\cup\{g^{+},g^{-}\} \text{ with }\eta_x\neq\eta_y\}.
\end{equation*}
The following lemma is obvious but will be useful.
\begin{lemma}
For any increasing event  $A\subseteq \{0,1\}^{\overline{\mathscr{B}}(\Lambda)\cup\mathcal{E}^{\vec{h}}_{+}(\Lambda)\cup\mathcal{E}^{\vec{h}}_{-}(\Lambda)}$, we have
\begin{equation}
\mathbb{P}_{\Lambda,\eta,\beta,H}^{\vec{h}}(A)=\mathbb{P}_{\Lambda,w,\beta,H}^{|\vec{h}|}(A | E(\Lambda,\eta))\leq \mathbb{P}_{\Lambda,w,\beta,H}^{|\vec{h}|}(A).
\end{equation}
\end{lemma}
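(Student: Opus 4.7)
The claim has two parts: the equality $\mathbb{P}_{\Lambda,\eta,\beta,H}^{\vec{h}}(A)=\mathbb{P}_{\Lambda,w,\beta,H}^{|\vec{h}|}(A\mid E(\Lambda,\eta))$ and the inequality $\mathbb{P}_{\Lambda,w,\beta,H}^{|\vec{h}|}(A\mid E(\Lambda,\eta))\leq \mathbb{P}_{\Lambda,w,\beta,H}^{|\vec{h}|}(A)$. The first part is essentially a restatement of the definition of $\mathbb{P}_{\Lambda,\eta,\beta,H}^{\vec{h}}$ just given in the preceding paragraph (as the edge marginal of the Edwards--Sokal coupling, which is identified there with the wired $|\vec{h}|$-random cluster measure conditioned on $E(\Lambda,\eta)$), so the plan is simply to invoke that identification. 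One could alternatively verify it by directly comparing the weights in \eqref{eqCoupledef} after summing over $\sigma$: the indicator $1_{F(\Lambda,\eta)}$ restricts the sum to configurations compatible with $\eta$, which, for a fixed $\omega$, either contributes $0$ if $E(\Lambda,\eta)$ fails or yields a factor proportional to the wired weight on $E(\Lambda,\eta)$ otherwise.

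The second part is the real content. My plan is to apply the FKG inequality for $\mathbb{P}_{\Lambda,w,\beta,H}^{|\vec{h}|}$ to the increasing event $A$ and the event $E(\Lambda,\eta)$, after observing that $E(\Lambda,\eta)$ is a \emph{decreasing} event in $\omega$. Concretely, $E(\Lambda,\eta)$ requires that no two vertices $x,y\in\partial_{ex}\Lambda\cup\{g^{+},g^{-}\}$ with $\eta_x\neq\eta_y$ are joined by an open path in $\bar{\Lambda}\cup\{g^{+},g^{-}\}$; since opening an additional edge can only create, never destroy, a connection, each of these disconnection constraints is preserved under decreasing $\omega$, hence $E(\Lambda,\eta)$ is decreasing. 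The wired generalized random cluster measure $\mathbb{P}_{\Lambda,w,\beta,H}^{|\vec{h}|}$ satisfies the FKG lattice property (this is standard for random cluster measures with $q\geq 1$ and was already used in the proof of Lemma~\ref{lemcoupleRC}; note that because $g^{+}$ and $g^{-}$ are wired in the $|\vec{h}|$-measure, no indicator of the form $1_{\{g^{+}\centernot\longleftrightarrow g^{-}\}}$ spoils the lattice property here).

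Given these two observations, the standard FKG consequence
\[
\mathbb{P}_{\Lambda,w,\beta,H}^{|\vec{h}|}(A\cap E(\Lambda,\eta))\leq \mathbb{P}_{\Lambda,w,\beta,H}^{|\vec{h}|}(A)\,\mathbb{P}_{\Lambda,w,\beta,H}^{|\vec{h}|}(E(\Lambda,\eta))
\]
for an increasing event $A$ and a decreasing event $E(\Lambda,\eta)$ yields the inequality upon dividing by $\mathbb{P}_{\Lambda,w,\beta,H}^{|\vec{h}|}(E(\Lambda,\eta))$, which is strictly positive (e.g.\ the all-closed configuration lies in $E(\Lambda,\eta)$ and has positive weight). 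I do not anticipate any real obstacle; the only point worth spelling out carefully is the verification that $E(\Lambda,\eta)$ is decreasing and that the FKG lattice property is available for $\mathbb{P}_{\Lambda,w,\beta,H}^{|\vec{h}|}$ in the presence of two ghost vertices. Since both reduce to well-known facts, the proof should be a few lines.
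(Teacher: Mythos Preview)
Your proposal is correct and matches the paper's own proof essentially verbatim: the paper simply notes that $E(\Lambda,\eta)$ is a decreasing event and invokes the FKG inequality for $\mathbb{P}_{\Lambda,w,\beta,H}^{|\vec{h}|}$, with the equality taken from the definition given just before the lemma. Your write-up is more detailed (you spell out why $E(\Lambda,\eta)$ is decreasing and why the FKG lattice property holds for the $|\vec{h}|$-measure with two ghosts), but the argument is the same.
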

\begin{proof}
This follows from the FKG inequality for $\mathbb{P}_{\Lambda,w,\beta,H}^{|\vec{h}|}$ (note that $E(\Lambda,\eta)$ is a decreasing event).
\end{proof}

For $\omega\in \{0,1\}^{\overline{\mathscr{B}}(\Lambda)\cup\mathcal{E}^{\vec{h}}_{+}(\Lambda)\cup\mathcal{E}^{\vec{h}}_{-}(\Lambda)}$, the conditional measure $\hat{\mathbb{P}}_{\Lambda,\eta,\beta,H}^{\vec{h}}(\cdot|\omega)$ on $\{-1,+1\}^{\Lambda}$ is realized by tossing independent fair coins --- one for each open cluster not containing $g^{+}$ or $g^{-}$ --- and then setting $\sigma_x=+1$ for all vertices $x$ in a cluster with heads and $-1$ for tails. For $x$ in the cluster of $g^{+}$ (respectively, $g^{-}$), $\sigma_x=+1$ (respectively, $\sigma_x=-1$).

\subsection{Weak mixing property for a small field}
We first consider $\beta\in[0,\beta_c(d))$ and $H$ small. The following coupling between RFIMs with different boundary conditions is very important.
\begin{lemma}\label{lemcoupleIsing}
Let $\Delta\subseteq \Lambda$ be finite subsets of $\mathbb{Z}^d$. Then
\begin{equation}
\sup_{\eta,\eta^{\prime}\in\{-1,+1\}^{\partial_{ex} \Lambda}}TV\left(P^{\vec{h}}_{\Lambda,\eta,\beta,H}(\sigma_{\Delta}\in \cdot), P^{\vec{h}}_{\Lambda,\eta^{\prime},\beta,H}(\sigma_{\Delta}\in \cdot)\right)\leq\mathbb{P}_{\Lambda,w,\beta,H}^{|\vec{h}|}(\partial_{in}\Delta\overset{\mathbb{Z}^d}{\longleftrightarrow} \partial_{ex}\Lambda).
\end{equation}
\end{lemma}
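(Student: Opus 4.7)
The plan is to prove Lemma \ref{lemcoupleIsing} by lifting the random-cluster coupling of Lemma \ref{lemcoupleRC} to Ising spins via the Edwards--Sokal coupling described in Section \ref{subsecES}. The strategy has three stages: translate the Ising boundary conditions into random-cluster boundary conditions, couple the edge configurations, and then couple the spins conditionally on the edges.

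First, I would convert each Ising boundary condition $\eta\in\{-1,+1\}^{\partial_{ex}\Lambda}$ into a random-cluster boundary condition $\rho(\eta)\in\{0,1\}^{\mathscr{B}(\Lambda^C)\cup\mathscr{E}_{+}(\Lambda^C)\cup\mathscr{E}_{-}(\Lambda^C)}$ by declaring each external boundary edge from $x\in\partial_{ex}\Lambda$ with $\eta_x=+1$ to $g^{+}$ open, each one with $\eta_x=-1$ to $g^{-}$ open, and all other edges in $\mathscr{B}(\Lambda^C)$ closed. The Edwards--Sokal identity from Section~\ref{subsecES} then exhibits $P^{\vec{h}}_{\Lambda,\eta,\beta,H}$ as the spin marginal of a joint measure whose edge marginal is essentially $\mathbb{P}^{\vec{h}}_{\Lambda,\rho(\eta),\beta,H}$ (the indicator $1_{\{g^{+}\centernot\longleftrightarrow g^{-}\}}$ enforcing the same compatibility as $E(\Lambda,\eta)$), and whose conditional spin distribution given the edges assigns $+1$ to clusters containing $g^{+}$, $-1$ to clusters containing $g^{-}$, and independent fair coins to all remaining clusters.

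Second, I would feed $\rho(\eta)$ and $\rho(\eta')$ into the exploration coupling of Lemma \ref{lemcoupleRC} to obtain a joint law of $\omega^{\rho(\eta)}$, $\omega^{\rho(\eta')}$, and a dominating $\omega^w$ sampled from $\mathbb{P}^{|\vec{h}|}_{\Lambda,w,\beta,H}$, in which the three configurations are explored from $\partial_{ex}\Lambda$ inward with a shared family of uniform random variables. On the event $\{\partial_{in}\Delta\centernot\longleftrightarrow\partial_{ex}\Lambda\text{ in }\omega^w\}$, the boundary cluster of $\omega^w$ is disjoint from $\Delta$, so the post-$\tau$ phase of the exploration uses a common closed boundary for both $\mathbb{P}^{\vec{h}}_{\Lambda,\rho(\eta),\beta,H}$ and $\mathbb{P}^{\vec{h}}_{\Lambda,\rho(\eta'),\beta,H}$; consequently $\omega^{\rho(\eta)}$ and $\omega^{\rho(\eta')}$ coincide on every edge in the connected component of $\Delta$. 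I would then extend this to a coupling of the two Ising samples by assigning $\pm 1$ according to the $g^{\pm}$-containments of each cluster and using the \emph{same} fair coin for any cluster that is common to $\omega^{\rho(\eta)}$ and $\omega^{\rho(\eta')}$ and contains neither ghost.

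The main obstacle, and the step that requires the most care, is verifying that on the good event it is not merely the edge indicators on $\overline{\mathscr{B}}(\Delta)$ that agree in the two samples, but the full cluster in $\Lambda\cup\{g^{+},g^{-}\}$ containing each $x\in\Delta$; this is what allows the ghost-or-coin assignment to be coupled identically. Granted that, each $x\in\Delta$ either lies in a cluster meeting $g^{+}$ or $g^{-}$ through internal external edges in $\mathcal{E}^{\vec{h}}_{+}(\Lambda)\cup\mathcal{E}^{\vec{h}}_{-}(\Lambda)$ (which are shared, since $\vec{h}$ is the same in both measures), or in a purely internal cluster whose spin is a single shared coin flip. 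In either case $\sigma^{\eta}_x=\sigma^{\eta'}_x$ on the good event, so the standard coupling inequality gives
\begin{equation*}
TV\!\left(P^{\vec{h}}_{\Lambda,\eta,\beta,H}(\sigma_{\Delta}\in\cdot),\,P^{\vec{h}}_{\Lambda,\eta',\beta,H}(\sigma_{\Delta}\in\cdot)\right)\le \mathbb{P}^{|\vec{h}|}_{\Lambda,w,\beta,H}\!\left(\partial_{in}\Delta\overset{\mathbb{Z}^d}{\longleftrightarrow}\partial_{ex}\Lambda\right),
\end{equation*}
uniformly in $\eta,\eta'$, which is the claim.
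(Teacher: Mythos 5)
Your proposal is correct and follows essentially the same route as the paper: the paper likewise runs the exploration coupling of Lemma \ref{lemcoupleRC} on the FK marginals of the Edwards--Sokal coupling (all stochastically dominated by $\mathbb{P}^{|\vec{h}|}_{\Lambda,w,\beta,H}$) and then assigns spins with shared fair coins to the clusters disjoint from the wired boundary cluster. The only cosmetic difference is that the paper encodes the Ising boundary condition $\eta$ by conditioning $\mathbb{P}^{|\vec{h}|}_{\Lambda,w,\beta,H}$ on the decreasing event $E(\Lambda,\eta)$ rather than via your explicit ghost-wired boundary configuration $\rho(\eta)$ --- these are the same measure --- and the cluster-agreement point you flag does hold, since on the good event the cluster of any $x\in\Delta$ in the dominated configuration is contained in the region where the two explored configurations coincide.
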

\begin{proof}
Our argument is similar to that used for Lemma 6.2 in \cite{Ale98}. A proof like that of Lemma \ref{lemcoupleRC} above gives a coupling of the three measures $\mathbb{P}_{\Lambda,\eta,\beta,H}^{\vec{h}}$, $\mathbb{P}_{\Lambda,\eta^{\prime},\beta,H}^{\vec{h}}$ and $\mathbb{P}_{\Lambda,+,\beta,H}^{|\vec{h}|}$ (the last of these three is the same as  $\mathbb{P}_{\Lambda,w,\beta,H}^{|\vec{h}|}$) such that $\omega^{\eta}$ from $\mathbb{P}_{\Lambda,\eta,\beta,H}^{\vec{h}}$ and $\omega^{\eta^{\prime}}$ from $\mathbb{P}_{\Lambda,\eta^{\prime},\beta,H}^{\vec{h}}$ agree on $\overline{\mathscr{B}}(\Lambda\setminus C^+_{\partial_{ex}\Lambda})\cup \mathscr{E}(\Lambda\setminus C^+_{\partial_{ex}\Lambda})$ where
\begin{equation}
C^+_{\partial_{ex}\Lambda}:=\{x\in\Lambda: x\overset{\mathbb{Z}^d}{\longleftrightarrow}\partial_{ex}\Lambda \text{ in }\omega^+ \text{ from } \mathbb{P}_{\Lambda,+,\beta,H}^{|\vec{h}|}\}.
\end{equation}
To get a configuration $\sigma^{\eta}$ from $P^{\vec{h}}_{\Lambda,\eta,\beta,H}$, by the Edwards-Sokal coupling in Subsection \ref{subsecES}, one assigns $+1$ or $-1$ with equal probability to each open cluster of $\omega^{\eta}$ disjoint from $C^+_{\partial_{ex}\Lambda}$ and from the clusters of $g^+,g^-$. This assignment can be done identically for the clusters of  $\omega^{\eta}$ and $\omega^{\eta^{\prime}}$ disjoint from  $C^+_{\partial_{ex}\Lambda}$, which yields a coupling of $P^{\vec{h}}_{\Lambda,\eta,\beta,H}$ and $P^{\vec{h}}_{\Lambda,\eta^{\prime},\beta,H}$ such that the corresponding configurations $(\sigma^{\eta},\sigma^{\eta^{\prime}})$ agree on $\Lambda \setminus C^+_{\partial_{ex}\Lambda}$. This completes the proof of the lemma.
\end{proof}

We are ready to prove Theorems \ref{thmSBSDB}-\ref{thmSBG}.

\begin{proof}[Proof of Theorems \ref{thmSBSDB} and \ref{thmSBLDB}]
In this case, we have $|h_x|=1$ for each $x\in\mathbb{Z}^d$. So the marginal on $\overline{\mathscr{B}}(\Lambda)$ of $\mathbb{P}_{\Lambda,w,\beta,H}^{|\vec{h}|}$ is the same as that of $\mathbb{P}_{\Lambda,w,\beta,H}$ which is defined in \eqref{eqRCdef}. Hence these two theorems follow from Lemma \ref{lemcoupleIsing} and Theorems \ref{thmKerIsing} and \ref{thmPCdecay}.
\end{proof}
\begin{proof}[Proof of Theorem \ref{thmSBG}]
In this case, $\mathbb{P}_{\Lambda,w,\beta,H}^{|\vec{h}|}$ is stochastically dominated by $\mathbb{P}_{\Lambda,w,\beta,\infty}$. Note that when $\beta\in[0,\beta_P(d))$, $\mathbb{P}_{\Lambda,w,\beta,\infty}$ is an independent Bernoulli bond percolation with probability $1-e^{-2\beta}<p_c^b(d)$, which is subcritical. So the theorem follows from Lemma~\ref{lemcoupleIsing}.
\end{proof}

\subsection{Weak mixing property for a large field}
In this subsection, we consider the case when $H$ is large. The idea is to find a set of vertices (such as a *-circuit when $d=2$) in the annulus $\Lambda\setminus\Delta$ which serves as the location of a common boundary condition for $P_{\Lambda,\eta,\beta,H}^{\vec{h}}$ and $P_{\Lambda,\eta^{\prime},\beta,H}^{\vec{h}}$. The proof is somewhat similar to that of Lemma \ref{lemcoupleRC}. Here are the details.

We order the vertices of $\Lambda=\{x_1,x_2,\dots\}$ in such a way that $x$ precedes $y$ in the ordering if $d(x,\Lambda^C)<d(y,\Lambda^C)$.  We explore vertices of the configuration $\sigma^{\eta}$ from $P^{\vec{h}}_{\Lambda,\eta,\beta,H}$ and $\sigma^{\eta^{\prime}}$ from $P^{\vec{h}}_{\Lambda,\eta^{\prime},\beta,H}$ that are connected by an open path  to $\partial_{ex}\Lambda$ of a certain site percolation process (that we are about to construct). We will denote this site percolation on $\Lambda$ by $\mathbf{S}=\{S_x:x\in\Lambda\}$ with the boundary condition defined by
\begin{equation}
S_x=1\{\eta_x\neq\eta^{\prime}_x\}, x\in\partial_{ex}\Lambda.
\end{equation}
Denote by $W_t$ the set of sites explored up to (and including) the integer time $t$, and let $V_t:=\{x\in W_t: S_x=1\}$ be the set of sites with explored value $1$ in $\mathbf{S}$ up to time $t$.
\begin{itemize}
\item Let $W_0:=\partial_{ex}\Lambda$ and $V_0:=\{x\in\partial_{ex}\Lambda: S_x=1\}$.
\item For each $t\geq 0$, reveal the smallest (according to the above ordering) unexplored site $x$ that is adjacent to $V_t$, setting its value in $\mathbf{S}$ via an independent $[0,1]$ uniformly distributed random variable $U_x$:
\begin{align}
&\sigma_x^{\eta}=\begin{cases}
      +1, & U_x\leq P^{\vec{h}}_{\Lambda,\eta,\beta,H}(\sigma_x=+1|\sigma_{W_t}=\sigma^{\eta}_{W_t}), \\
      -1, & \text{otherwise},
   \end{cases}\label{eqreveal1}\\
  & \sigma_x^{\eta^{\prime}}=\begin{cases}
      +1, & U_x\leq P^{\vec{h}}_{\Lambda,\eta^{\prime},\beta,H}(\sigma_x=+1|\sigma_{W_t}=\sigma^{\eta^{\prime}}_{W_t}), \\
      -1, & \text{otherwise},
   \end{cases}\label{eqreveal2}\\
   &S_x=1\{\sigma_x^{\eta}\neq\sigma_x^{\eta^{\prime}}\}.
\end{align}
 Let $W_{t+1}:=W_t\cup\{x\}$, and
\begin{equation}
 V_{t+1}:=\begin{cases}
      V_t\cup\{x\}, & S_x=1,\\
      V_t, & \text{otherwise}.
   \end{cases}
\end{equation}
\item Let $\tau$ be the smallest $t$ at which there is no unexplored site $x$ that is adjacent to $V_t$.
\end{itemize}
Then $V_{\tau}$ is the union of open boundary clusters of $\mathbf{S}$. It is clear that
\begin{equation}
\sigma_x^{\eta}=\sigma_x^{\eta^{\prime}} \text{ for each } x\in (\partial_{ex}V_{\tau})\cap\bar{\Lambda}.
\end{equation}
After exploration time $\tau$, we may reveal the remaining vertices for $\sigma^{\eta}$ and $\sigma^{\eta^{\prime}}$ using the procedure as in \eqref{eqreveal1} and \eqref{eqreveal2}. It is easy to see that $\sigma_x^{\eta}=\sigma_x^{\eta^{\prime}}$ for all those remaining vertices since $(\partial_{ex}V_{\tau})\cap\bar{\Lambda}$ serves as the common boundary condition for $P^{\vec{h}}_{\Lambda,\eta,\beta,H}$ and $P^{\vec{h}}_{\Lambda,\eta^{\prime},\beta,H}$.

Let $sgn(h_x)$ be the sign of $h_x$. Note that if $\vec{H}$ is the bimodal or Gaussian field, $h_x\neq 0$ for all $x\in\Lambda$ almost surely. By worst case analysis, i.e., considering the case where all $2d$ neighbors of $x$ have a sign different from $h_x$, we have
\begin{equation}\label{eqsignS}
P^{\vec{h}}_{\Lambda,\eta,\beta,H}(\sigma^{\eta}_x=sgn(h_x))\geq a(\beta,H,|h_x|):=\frac{e^{-2d\beta+H|h_x|}}{e^{-2d\beta+H|h_x|}+e^{2d\beta-H|h_x|}},
\end{equation}
where the RHS is independent of $\Lambda$ and $\eta$. Therefore $\mathbf{S}$ is stochastically dominated by an inhomogeneous independent site percolation $\mathbf{T}^{\vec{h}}$ (with the same boundary condition as $\mathbf{S}$) with probabilities
\begin{equation}\label{eqdefp}
p_x=p_x(\beta,H,h_x):=1-a^2(\beta,H,|h_x|), x\in\Lambda.
\end{equation}
We emphasize that $p_x$ only depends on $\beta, H$ and $h_x$ and on nothing else. Let $P^{in}_{\Lambda,\vec{h}}$ denote the probability distribution of $\mathbf{T}^{\vec{h}}$. Note that if $\vec{h}\in\mathbb{R}^{\mathbb{Z}^d}$, one can use  \eqref{eqdefp} to define $P^{in}_{\mathbb{Z}^d,\vec{h}}$. We just proved:
\begin{lemma}\label{lemCoupleLF}
Let $\Delta\subseteq \Lambda$ be finite subsets of $\mathbb{Z}^d$. Then
\begin{equation}
\sup_{\eta,\eta^{\prime}\in\{-1,+1\}^{\partial \Lambda}}TV\left(P^{\vec{h}}_{\Lambda,\eta,\beta,H}(\sigma_{\Delta}\in \cdot), P^{\vec{h}}_{\Lambda,\eta^{\prime},\beta,H}(\sigma_{\Delta}\in \cdot)\right)\leq P^{in}_{\Lambda,\vec{h}}(\partial_{ex}\Delta\longleftrightarrow \partial_{ex}\Lambda).
\end{equation}
\end{lemma}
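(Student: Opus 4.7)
The plan is to construct an explicit coupling of $P^{\vec{h}}_{\Lambda,\eta,\beta,H}$ and $P^{\vec{h}}_{\Lambda,\eta^{\prime},\beta,H}$ whose disagreement set is controlled, vertex-by-vertex, by an independent site percolation. The guiding intuition is that when a spin's configuration is revealed, the two Ising measures can differ there only if the revealing uniform $U_x$ falls in a window whose width I can bound uniformly in the conditioning; after a disagreement cluster halts, the two measures share a common boundary and must agree on the remainder.

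First, I would order the vertices of $\Lambda$ by their distance to $\Lambda^C$ and perform a breadth-first exploration starting from $\partial_{ex}\Lambda$, carried out simultaneously in both measures using a single $\mathrm{Uniform}[0,1]$ variable $U_x$ per vertex. Concretely, at each step I take the smallest unexplored $x$ adjacent to the current disagreement cluster $V_t$, set $\sigma_x^{\eta}$ and $\sigma_x^{\eta^{\prime}}$ by thresholding $U_x$ against the respective conditional probabilities of $\sigma_x=+1$ given the already-revealed neighbors, and declare $S_x=1\{\sigma_x^{\eta}\neq\sigma_x^{\eta^{\prime}}\}$. The exploration halts at time $\tau$, and then $(\partial_{ex}V_{\tau})\cap \bar{\Lambda}$ forms a common boundary condition for the two measures on the unexplored region; so the couplings can be continued with $\sigma^{\eta}=\sigma^{\eta^{\prime}}$ on all remaining sites. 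Consequently $\sigma_\Delta^{\eta}\neq\sigma_\Delta^{\eta^{\prime}}$ only if $\partial_{ex}\Delta$ is connected to $\partial_{ex}\Lambda$ through $\{S_x=1\}$.

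Next, I would stochastically dominate $\mathbf{S}$ by the product measure $P^{in}_{\Lambda,\vec{h}}$. The key inequality is a worst-case bound: for any boundary condition $\eta$ and any revealed neighborhood, the Ising conditional distribution of $\sigma_x$ is ferromagnetically biased by $h_x$ even if all $2d$ neighbors have the opposite sign, giving
\begin{equation}
P^{\vec{h}}_{\Lambda,\eta,\beta,H}\bigl(\sigma_x=\mathrm{sgn}(h_x)\,\big|\,\text{any conditioning}\bigr)\geq a(\beta,H,|h_x|)=\frac{e^{-2d\beta+H|h_x|}}{e^{-2d\beta+H|h_x|}+e^{2d\beta-H|h_x|}}.
\end{equation}
Since $U_x$ is shared between the two measures, disagreement at $x$ occurs on an event of conditional probability at most $1-a^2(\beta,H,|h_x|)=p_x$, uniformly in the history. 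An induction on the exploration step then shows that $\mathbf{S}$ is stochastically dominated by the independent site percolation $\mathbf{T}^{\vec{h}}$ with parameters $p_x$, and the bound follows upon noting $TV\leq \Pr(\sigma_{\Delta}^{\eta}\neq\sigma_{\Delta}^{\eta^{\prime}})$ under the coupling.

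The main obstacle, in my view, is formalizing the uniform-in-conditioning lower bound in the right way so that the stochastic domination argument can be pushed through a sequential exploration: one must verify that regardless of what neighbors of $x$ were revealed (and what their values are in either configuration) the conditional disagreement probability at $x$ never exceeds $p_x$. Once that is in place, the rest is straightforward bookkeeping connecting the total variation distance to the percolation event $\{\partial_{ex}\Delta\longleftrightarrow\partial_{ex}\Lambda\}$ in $P^{in}_{\Lambda,\vec{h}}$.
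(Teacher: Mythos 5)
Your proposal is correct and follows essentially the same route as the paper: the same distance-ordered exploration of the disagreement cluster from $\partial_{ex}\Lambda$ using a shared uniform $U_x$ per vertex, the same worst-case bound $a(\beta,H,|h_x|)$ yielding stochastic domination of $\mathbf{S}$ by the independent site percolation $\mathbf{T}^{\vec{h}}$ with parameters $p_x=1-a^2(\beta,H,|h_x|)$, and the same identification of the disagreement event with $\{\partial_{ex}\Delta\longleftrightarrow\partial_{ex}\Lambda\}$.
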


\begin{proof}[Proof of Theorem \ref{thmABB}]
In this case, we have $|h_x|=1$ for each $x\in\mathbb{Z}^d$. So for each $\beta\in[0,\infty)$, by \eqref{eqsignS} and \eqref{eqdefp}, we can choose $H_2(\beta)$ such that for each $H>H_2(\beta)$
\begin{equation}
p_x=1-a^2(\beta,H,1)<p_c^s(d)/2 \text{ for all } x\in\mathbb{Z}^d,
\end{equation}
where  $p_c^s(d)$ is the critical probability for independent Bernoulli site percolation on $\mathbb{Z}^d$.
Then $P^{in}_{\mathbb{Z}^d,\vec{h}}$ is stochastically dominated by an independent Bernoulli site percolation with probability $p_c^s(d)/2$. In particular, there exists $C_4\in(0,\infty)$ such that
\begin{equation}\label{eqindB}
P^{in}_{\mathbb{Z}^d,p}(x\longleftrightarrow y)\leq e^{-C_4|x-y|} \text{ for all } x,y\in\mathbb{Z}^d.
\end{equation}
The theorem now follows from \eqref{eqindB} and Lemma \ref{lemCoupleLF}.
\end{proof}

We next prove Theorem \ref{thmABG}. The proof is more involved than that of Theorem \ref{thmABB}. We assume $H_x\overset{d}{=}N(0,1)$ for each $x\in\mathbb{Z}^d$ in the rest of this subsection. So we can choose $\delta>0$ such that
\begin{equation}\label{eqGaussian}
Prob(|H_x|<\delta)<p_c^s(d)/4 \text{ for each } x\in\mathbb{Z}^d.
\end{equation}
We consider the averaged measure $\bar{P}^{in}_{\mathbb{Z}^d,\vec{H}}$ of the site percolation $\mathbf{T}^{\vec{H}}$ with random field $\vec{H}$ and random probabilities given by
\begin{equation}
p_x(\beta,H,H_x):=1-a^2(\beta,H,|H_x|), x\in\Lambda.
\end{equation}
That is,
\begin{equation}
\bar{P}^{in}_{\mathbb{Z}^d,\vec{H}}(\cdot)=\int_{\mathbb{R}^{\mathbb{Z}^d}}P^{in}_{\mathbb{Z}^d,\vec{h}}(\cdot)Prob(d\vec{h}).
\end{equation}
Then we have
\begin{equation}\label{eqTopen}
\bar{P}^{in}_{\mathbb{Z}^d,\vec{H}}(T^{\vec{H}}(x)=1)=\bar{E}^{in}_{\mathbb{Z}^d,\vec{H}}[p_x(\beta,H,H_x)]\leq Prob(|H_x|<\delta)+1-a^2(\beta,H,\delta).
\end{equation}
This combined with \eqref{eqGaussian} and \eqref{eqsignS} implies that for each $\beta\in[0,\infty)$ there exists $H_3(\beta)$ such that for each $H>H_3(\beta)$
\begin{equation}\label{eqsignSG}
\bar{P}^{in}_{\mathbb{Z}^d,\vec{H}}(T^{\vec{H}}(x)=1)<p_c^s(d)/2 \text{ for all }x\in\mathbb{Z}^d.
\end{equation}
So $\bar{P}^{in}_{\mathbb{Z}^d,\vec{H}}$ is stochastically dominated by an independent Bernoulli site percolation with probability $p_c^s(d)/2$. Thus we proved:
\begin{lemma}\label{lemdecayave}
For each $\beta\in[0,\infty)$ there exists $H_3(\beta)$ such that for each $H>H_3(\beta)$,
\begin{equation}
\bar{P}^{in}_{\mathbb{Z}^d,\vec{H}}(x\longleftrightarrow y)\leq e^{-2C_5|x-y|} \text{ for any } x,y\in\mathbb{Z}^d,
\end{equation}
where $C_5\in(0,\infty)$ depends only on $d$.
\end{lemma}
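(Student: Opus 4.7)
The plan is to combine the stochastic domination observation established in the paragraph preceding the lemma with the classical subcritical exponential decay theorem for independent Bernoulli site percolation.

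First I would point out that for any deterministic field realization $\vec{h}$, the measure $P^{in}_{\mathbb{Z}^d,\vec{h}}$ is by construction a product measure: the variables $T^{\vec{h}}(x)$ are independent across $x$, and the marginal at $x$ is $p_x(\beta,H,h_x)$, which depends only on the single coordinate $h_x$. Since the coordinates $H_x$ of the random field are i.i.d., averaging $P^{in}_{\mathbb{Z}^d,\vec{h}}$ against the law of $\vec{H}$ preserves site-independence (a direct application of Fubini together with independence of the $H_x$'s). Hence $\bar{P}^{in}_{\mathbb{Z}^d,\vec{H}}$ is itself a product Bernoulli measure with site marginal $\bar{E}[p_x(\beta,H,H_x)]$ at $x$. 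By \eqref{eqTopen} and \eqref{eqsignSG}, this marginal is bounded above by $p_c^s(d)/2$ uniformly in $x$ as soon as $H>H_3(\beta)$.

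Next I would use the standard monotone coupling between Bernoulli product measures: a product with site marginals bounded by $q$ is stochastically dominated by the homogeneous Bernoulli site percolation of parameter $q$. Applying this with $q=p_c^s(d)/2<p_c^s(d)$ shows that $\bar{P}^{in}_{\mathbb{Z}^d,\vec{H}}$ is dominated by a strictly subcritical homogeneous Bernoulli site percolation on $\mathbb{Z}^d$. Since the connection event $\{x\longleftrightarrow y\}$ is increasing, the same domination transfers to the two-point probabilities.

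Finally, I would invoke the classical exponential decay of the two-point function in the subcritical phase of independent Bernoulli site percolation on $\mathbb{Z}^d$ (Menshikov; Aizenman--Barsky; or, in the spirit of this paper, the OSSS-based proof of \cite{DCRT17} also cited in the proof of Theorem \ref{thmPCdecay}): at any $p<p_c^s(d)$ there exists $c=c(p,d)>0$ with $P_p(x\longleftrightarrow y)\le e^{-c|x-y|}$. Taking $p=p_c^s(d)/2$ and setting $2C_5:=c(p_c^s(d)/2,d)$ yields the claimed bound; note that $C_5$ then depends only on $d$, as required. The only point that requires a moment's care is preservation of independence under averaging over $\vec{H}$; once that is granted, everything else is a direct appeal to well-known subcritical results, so I do not expect any substantive obstacle.
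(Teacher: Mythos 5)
Your proposal is correct and follows essentially the same route as the paper: observe that $\bar{P}^{in}_{\mathbb{Z}^d,\vec{H}}$ is a product Bernoulli measure with site marginal $\bar{E}[p_x(\beta,H,H_x)]$ bounded by $p_c^s(d)/2$ via \eqref{eqTopen} and \eqref{eqsignSG}, dominate it by homogeneous subcritical Bernoulli site percolation, and invoke classical subcritical exponential decay. The only difference is that you make explicit the Fubini/independence step and the citation for exponential decay, both of which the paper leaves implicit.
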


We next prove that exponential decay is also valid for the quenched measure $P^{in}_{\mathbb{Z}^d,\vec{h}}$.
\begin{lemma}\label{lemdecayque}
For each $\beta\in[0,\infty)$ there exists $H_3(\beta)$ such that for each $H>H_3(\beta)$,
\begin{equation}
P^{in}_{\mathbb{Z}^d,\vec{h}}(x\longleftrightarrow y)\leq C_6(x,\vec{h})e^{-C_5|x-y|} \text{ for any } x,y\in\mathbb{Z}^d\end{equation}
for almost all realizations $\vec{h}\in\{-1,+1\}^{\mathbb{Z}^d}$ of $\vec{H}$, where $C_6(x,\vec{h})\in(0,\infty)$ depends only on $x$ and $\vec{h}$ (and $d$).
\end{lemma}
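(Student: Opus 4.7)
The plan is to bootstrap the annealed exponential decay given by Lemma~\ref{lemdecayave} into the desired quenched statement via a standard Markov plus first Borel--Cantelli argument. The cost of this bootstrap is a factor of two in the decay rate (the annealed rate $2C_5$ becomes the quenched rate $C_5$) and a random prefactor $C_6(x,\vec{h})$, which is exactly the form of the inequality in the lemma statement.

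First I would apply Markov's inequality to the nonnegative random variable $P^{in}_{\mathbb{Z}^d,\vec{h}}(x \longleftrightarrow y)$. Its expectation under the disorder $\vec{H}$ is the averaged probability $\bar{P}^{in}_{\mathbb{Z}^d,\vec{H}}(x \longleftrightarrow y)$, which by Lemma~\ref{lemdecayave} is bounded by $e^{-2C_5|x-y|}$. Thus, for each pair $x,y \in \mathbb{Z}^d$,
\begin{equation*}
Prob\bigl(P^{in}_{\mathbb{Z}^d,\vec{h}}(x \longleftrightarrow y) > e^{-C_5|x-y|}\bigr) \leq e^{C_5|x-y|}\, \bar{P}^{in}_{\mathbb{Z}^d,\vec{H}}(x \longleftrightarrow y) \leq e^{-C_5|x-y|}.
\end{equation*}

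Second, fix $x \in \mathbb{Z}^d$. Because the number of sites $y$ at graph distance $n$ from $x$ grows only polynomially in $n$, the sum $\sum_{y \in \mathbb{Z}^d} e^{-C_5|x-y|}$ converges. The first Borel--Cantelli lemma then yields that for almost every $\vec{h}$ the exceptional set
\begin{equation*}
\mathcal{Y}(x,\vec{h}) := \bigl\{y \in \mathbb{Z}^d : P^{in}_{\mathbb{Z}^d,\vec{h}}(x \longleftrightarrow y) > e^{-C_5|x-y|}\bigr\}
\end{equation*}
is finite. Intersecting the corresponding full-measure events over the countable index $x \in \mathbb{Z}^d$ produces a single full-measure event on which $\mathcal{Y}(x,\vec{h})$ is finite for every $x$ simultaneously.

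Finally, on this event I would set
\begin{equation*}
C_6(x,\vec{h}) := \max\Bigl\{1,\; \max_{y \in \mathcal{Y}(x,\vec{h})} e^{C_5|x-y|}\, P^{in}_{\mathbb{Z}^d,\vec{h}}(x \longleftrightarrow y)\Bigr\},
\end{equation*}
which is almost surely finite because the inner maximum is over a finite set. For $y \notin \mathcal{Y}(x,\vec{h})$, the bound $P^{in}_{\mathbb{Z}^d,\vec{h}}(x \longleftrightarrow y) \leq e^{-C_5|x-y|} \leq C_6(x,\vec{h})\,e^{-C_5|x-y|}$ holds by the definition of $\mathcal{Y}$ together with $C_6 \geq 1$; for $y \in \mathcal{Y}(x,\vec{h})$ it holds by construction of $C_6$. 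I do not expect any serious obstacle in this plan: the only mildly delicate point is the passage from ``for fixed $x$, almost all $\vec{h}$'' to ``for almost all $\vec{h}$, all $x$ simultaneously,'' which is harmless because $\mathbb{Z}^d$ is countable, and the essential exponential input $2C_5$ has already been produced by Lemma~\ref{lemdecayave}.
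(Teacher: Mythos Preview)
Your proof is correct and follows essentially the same idea as the paper: use the annealed bound from Lemma~\ref{lemdecayave} with rate $2C_5$, sacrifice half the rate, and absorb the remaining slack into an almost surely finite constant $C_6(x,\vec{h})$. The only cosmetic difference is that the paper obtains almost sure finiteness of $\sum_{y} e^{C_5|x-y|} P^{in}_{\mathbb{Z}^d,\vec{h}}(x\longleftrightarrow y)$ directly via Fubini--Tonelli (and takes $C_6$ to bound each term of that sum), whereas you reach the same conclusion through Markov's inequality plus Borel--Cantelli; these are interchangeable standard routes.
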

\begin{proof}
Lemma \ref{lemdecayave} implies that
\begin{equation}
\sum_{y\in\mathbb{Z}^d}e^{C_5|x-y|}\bar{P}^{in}_{\mathbb{Z}^d,\vec{H}}(x\longleftrightarrow y)<\infty.
\end{equation}
By the Fubini-Tonelli theorem,
\begin{equation}
\int_{\mathbb{R}^{\mathbb{Z}^d}}\sum_{y\in\mathbb{Z}^d}e^{C_5|x-y|}P^{in}_{\mathbb{Z}^d,\vec{h}}(x\longleftrightarrow y)dP(\vec{h})<\infty,
\end{equation}
which implies
\begin{equation}
\sum_{y\in\mathbb{Z}^d}e^{C_5|x-y|}P^{in}_{\mathbb{Z}^d,\vec{h}}(x\longleftrightarrow y)<\infty \text{ for almost all }\vec{h}.
\end{equation}
Therefore, there exists $C_6(x,\vec{h})\in(0,\infty)$ such that, for almost all $\vec{h}$,
\begin{equation}
e^{C_5|x-y|}P^{in}_{\mathbb{Z}^d,\vec{h}}(x\longleftrightarrow y)<C_6(x,\vec{h}) \text{ for all } y\in\mathbb{Z}^d,
\end{equation}
which completes the proof.
\end{proof}

\begin{proof}[Proof of Theorem \ref{thmABG}]
This follows immediately from Lemmas \ref{lemCoupleLF} and \ref{lemdecayque}.
\end{proof}

\section{Proof of Corollary \ref{cor2}}\label{secCor}
In this section, we prove Corollary \ref{cor2}.
\begin{proof}[Proof of Corollary \ref{cor2}]
The lower bound follows from the FKG inequality for $P^{\vec{h}}_{\mathbb{Z}^d,\beta,H}$. For the upper bound, we first write
\begin{align}
&\langle \sigma_x\sigma_y\rangle^{\vec{h}}_{\mathbb{Z}^d,\beta,H}-\langle \sigma_x\rangle^{\vec{h}}_{\mathbb{Z}^d,\beta,H}\langle \sigma_y\rangle^{\vec{h}}_{\mathbb{Z}^d,\beta,H}=P^{\vec{h}}_{\mathbb{Z}^d,\beta,H}(\sigma_x=\sigma_y=+1)+P^{\vec{h}}_{\mathbb{Z}^d,\beta,H}(\sigma_x=\sigma_y=-1)\nonumber\\
&\quad-P^{\vec{h}}_{\mathbb{Z}^d,\beta,H}(\sigma_x=+1,\sigma_y=-1)-P^{\vec{h}}_{\mathbb{Z}^d,\beta,H}(\sigma_x=-1,\sigma_y=+1)\nonumber\\
&\quad-[P^{\vec{h}}_{\mathbb{Z}^d,\beta,H}(\sigma_x=+1)-P^{\vec{h}}_{\mathbb{Z}^d,\beta,H}(\sigma_x=-1)][P^{\vec{h}}_{\mathbb{Z}^d,\beta,H}(\sigma_y=+1)-P^{\vec{h}}_{\mathbb{Z}^d,\beta,H}(\sigma_y=-1)].\label{eqtt}
\end{align}
So it suffices to show exponential decay of
\begin{equation}
|P^{\vec{h}}_{\mathbb{Z}^d,\beta,H}(\sigma_x=s_1,\sigma_y=s_2)-P^{\vec{h}}_{\mathbb{Z}^d,\beta,H}(\sigma_x=s_1)P^{\vec{h}}_{\mathbb{Z}^d,\beta,H}(\sigma_x=s_2)|
\end{equation}
for each $s_1, s_2\in\{-1,+1\}$.
Let $L:=|x-y|/2$ and $\Lambda_L(x):=x+\Lambda_L$. Then we have
\begin{align}
&\quad|P^{\vec{h}}_{\mathbb{Z}^d,\beta,H}(\sigma_x=s_1,\sigma_y=s_2)-P^{\vec{h}}_{\mathbb{Z}^d,\beta,H}(\sigma_x=s_1)P^{\vec{h}}_{\mathbb{Z}^d,\beta,H}(\sigma_x=s_2)|\nonumber\\
&\leq |P^{\vec{h}}_{\mathbb{Z}^d,\beta,H}(\sigma_x=s_1|\sigma_y=s_2)-P^{\vec{h}}_{\mathbb{Z}^d,\beta,H}(\sigma_x=s_1)|\nonumber\\
&\leq \sup_{\eta,\eta^{\prime}\in\{-1,+1\}^{\partial_{ex} \Lambda_L(x)}}|P^{\vec{h}}_{\Lambda_L(x),\eta,\beta,H}(\sigma_x=s_1)-P^{\vec{h}}_{\Lambda_L(x),\eta^{\prime},\beta,H}(\sigma_x=s_1)|\label{eqcor2}
\end{align}
by the spatial Markov property of the Ising model. Corollary \ref{cor2} now follows from \eqref{eqtt}, \eqref{eqcor2} and Theorem \ref{thmSBSDB}.
\end{proof}

\section*{Acknowledgements}
The research of JJ was partially supported by STCSM grant 17YF1413300 and that of CMN by US-NSF grant DMS-1507019. The authors thank Dan Stein and Janek Wehr for useful discussions. They also thank the Institute of Applied Mathematics of the Chinese Academy of Sciences, where some of the work reported here was done.

\end{document}